\newcommand{\ra}{\rightarrow}
\theoremstyle{plain}
\newtheorem{thm}{Theorem}[section]
\newtheorem{lemma}[thm]{Lemma}
\newtheorem{prop}[thm]{Proposition}
\theoremstyle{definition}
\newtheorem{exam}[thm]{Example}
\theoremstyle{remark}
\newtheorem{rem}[thm]{Remark}
\newtheorem{rems}[thm]{Remarks}
\newcommand{\E}{\mathcal{E}}
\newcommand{\G}{\mathcal{G}}
\begin{document}

\title{A Simple Loop Dwell Time Approach for Stability of Switched Systems} 
\author{Nikita Agarwal}
\address{Department of Mathematics, Indian Institute of Science Education and Research Bhopal, Bhopal Bypass Road, Bhauri, Bhopal 462 066, Madhya Pradesh, INDIA}
\email{nagarwal@iiserb.ac.in}

\begin{abstract}
We introduce a novel concept of simple loop dwell time and use it to give sufficient conditions for stability of a continuous-time linear switched system where switching between subsystems is governed by an underlying graph. We present a slow-fast switching mechanism to ensure stability of the system. We also consider switched systems with both stable and unstable subsystems, and obtain bounds on the dwell time in the stable subsystem and flee time from the unstable subsystem that guarantee the stability of the system. 
\end{abstract}

%
\maketitle
\section{Introduction} \label{intro} 
A continuous-time switched system is a piecewise continuous dynamical system with finitely many subsystems, and a piecewise constant function, known as the switching signal, which determines the switching of the system between subsystems. A signal is represented by the admissible switching from one subsystem to another, and the times at which these switchings take place. In this study, the switching between subsystems will be governed by an underlying digraph. That is, the system can switch from a subsystem to another if there is a directed edge between the corresponding vertices on the underying graph. Such systems have been studied in~\cite{IK, O1, KIO, KS, Man}. Switched systems have applications in electrical and power grid systems, where the underlying graph structure varies with time. A review on switched systems as an evolving dynamical systems, and its potential applications is presented in~\cite{B1} and references therein. The synchronization of time-varying networks is addressed in~\cite{St} using the concept of averaged topology, and in~\cite{B4} using a method called connection graph stability method. Synchronization of time-varying topologies due to moving agents is considered in~\cite{SB} . In~\cite{B3, B2, PSB, PSBS}, networks with randomly changing topologies are studied. It was observed that strongly connected components of graphs play an important role in understanding the network. In~\cite{Man}, the stability conditions for switched systems are reduced to conditions on strongly connected components of the graph. 

Even when all the subsystems
are stable, the switched system may be unstable for some switching signal. Moreover, one can construct a signal which can stabilize a switched system with all unstable subsystems. Thus, it is evident that the stability of a switched system not only depends on the properties of subsystems, but also on the switching signal. In this paper, we will give sufficient conditions on the switching times under which the switched system will be stable. In~\cite{O1}, a lower bound on
dwell time and average dwell time is obtained for the stability of such systems using the maximum cycle ratio and the maximum cycle mean of the associated switching graph. The concepts of dwell time and average dwell time were introduced in~\cite{Morse1} and~\cite{HM}, respectively. In this paper, for a switched system with all stable subsystems, we will obtain lower bounds on the simple loop dwell time, which is the minimum total time that the signal spends on each simple loop, that guarantees the stability of the system. To formalize the notion of total time spent on a simple loop, we introduce a standard decomposition algorithm. This approach has an advantage over the dwell time and average dwell time approaches, since the signal can switch slowly on some edges on a simple loop, and faster on some of its other edges. This gives rise to signals with a combination of slow and fast switching. Hence, a switched system, which would otherwise seem to be unstable, can be stabilized using the concept of simple loop dwell time. A similar phenomenon was observed in~\cite{GJBP}, where emergence of windows of opportunity for synchronization is exhibited numerically in coupled stochastic maps. The windows of opportunity for stability in continuous-time stochastic communication network was studied in~\cite{JB}. 

Further, when the switched system comprises of both stable and unstable subsystems, we give sufficient stability conditions on the switched signal. In addition to the notion of dwell time in a stable subsystem, we introduce the notion of flee time, which is the maximum time that the signal spends in the unstable subsystem. We obtain a lower bound on the dwell time, and an upper bound on the flee time, which ensures stability of the switched system. We also give a slow-fast mechanism to promote stability, as done for switched systems with stable subsystems only. Further, under a hypothesis on the underlying graph, we give bounds for the dwell time and the flee time, which ensures stability of the system. This uses the concept of topological sorting for acyclic graphs. Stability of switched systems with both stable and unstable subsystems have been considered in~\cite{SU}, using average dwell time approach. In~\cite{HXMA}, stability results are given for the case when all the subsystem matrices commute pairwise. No such condition on subsystem matrices is assumed here.

The paper is organized as follows: in~\ref{background}, we give some necessary background material on graphs, a graph-dependent switched system, and notion of its stability. In~\ref{stable}, we consider the stability of graph-dependent switched systems with all stable subsystems by finding suitable bounds on the simple loop dwell time with respect to the standard decomposition, given in Section~\ref{stdec}. The stability results for such systems are presented in Section~\ref{class}. The stability of graph-dependent switched systems with both stable and unstable subsystems is considered in~\ref{stable-unstable}. The results for special cases of a unidirectional ring and a bipartite graph are given in Sections~\ref{uni} and~\ref{bip}, respectively. Switched system associated to an arbitrary graph is given in Section~\ref{gen}.

\section{Background}\label{background}
In this section, we give some preliminaries on digraphs and describe a continuous-time switched system whose switching is given by an (infinite) path on an underlying graph. We let $\mathbb{N}$ denote the natural numbers. If \mbox{$k\in\mathbb{N}$}, 
we use the notation $\mathbf{k}=\{1,\cdots,k\}$. For a matrix $M$, $\Vert M\Vert$ will denote its spectral norm.

\subsection{Graphs}
A directed graph (or a digraph) is a set of vertices and directed edges from one vertex to another. In this paper, we assume that there is atmost one edge from one vertex to another. For simplicity of notation, we label the vertices of a graph $\mathcal{G}$ with $k$ vertices by $v_1,\cdots,v_k$. The vertex set $\{v_1,\cdots,v_k\}$ is denoted by $v(\G)$. Associated to every such graph $\mathcal{G}$, the edge set $\mathcal{E}(\mathcal{G})$ is the collection of all tuples $(i,j)$, where there is an edge from vertex $v_i$ to $v_j$, for $i,j\in\mathbf{k}$. The \emph{adjacency matrix} of the graph $\mathcal{G}$ is a $k\times k$ matrix $A_{\G}=[a_{ij}]$ given by $a_{ij}=1$, if there is an edge from $v_i$ to $v_j$. If there is no edge from $v_i$ to $v_j$, then $a_{ij}=0$. 
For $i\in\mathbf{k}$, the \emph{indegree} of the vertex $v_j$ is the $j^{\rm th}$ column sum of $A_\G$ 
and is the total number of incoming edges to the vertex $v_j$.  
Similarly, for $i\in\mathbf{k}$, the \emph{outdegree} of the vertex $v_i$ is the $i^{\rm th}$ row sum of $A_\G$ 
and is the total number of outgoing edges from the vertex $v_i$. A \emph{path} in the graph $\mathcal{G}$ is a sequence of 
vertices and directed edges such that from each vertex there is an edge to the next vertex in the sequence. The number of edges describing a path $p$ is called the \emph{length of the path}, denoted by $\ell(p)$. A path will be denoted either by the sequence of labels of vertices, or the sequence of edges, in the order they appear on the path. For two paths $p_1$ and $p_2$ in $\G$, their union $p=p_1+p_2$ denotes the path with $v(p)=v(p_1)\cup v(p_2)$, and the edge set of $p$ is the union of edge sets of $p_1$ and $p_2$, counting multiplicity. A \emph{loop} is a closed path; that is, a path whose terminal vertices are the same. An \emph{acyclic graph} is a graph without any loops. A loop is called a \emph{simple loop} if all the vertices on that loop are distinct. It is easy to see that every loop can be uniquely expressed as a union of simple loops. A graph is said to be \emph{strongly connected} 
if there is a path from each vertex to every other vertex. 

\begin{rem}
The maximum number of simple loops in a directed graph with $k$ vertices with adjacency matrix $A_\G$ is $\sum_{r=1}^k \sum_{i=1}^{k} (A_\G^r)_{ii}$. There are several algorithms to find all the simple loops in the graph $\G$.
\end{rem}

\subsection{Graph-dependent switched system}
Let $\G$ be a digraph with $k$ vertices $\{v_1,\cdots,v_k\}$. Let $\sigma:[0,\infty)\ra \{1,\cdots,k\}$ be a right-continuous piecewise constant function taking values in $\{1,\cdots,k\}$ with discontinuities $0=t_0<t_1<t_2<\cdots$, such that $(\sigma(t_i),\sigma(t_{i+1}))\in\mathcal{E}(\G)$, for all $i\geq 0$. Let $\sigma_i$ denote the value of $\sigma$ in the time interval
$[t_{i-1},t_i)$, for $n\geq 1$. Thus $\sigma_1,\cdots,\sigma_n$ is a path of length $n$ in $\G$. Such a signal $\sigma$ is called a $\G$-admissible signal. Each $\G$-admissible signal is identified by the following data: switching times $(t_n)_{n\geq 1}$, an increasing sequence of positive real numbers, and an infinite path $(\sigma_n)_{n\geq 1}$ in $\G$ (that is, $(\sigma(t_{n-1}),\sigma(t_{n}))\in\mathcal{E}(\G)$, for all $n\geq 1$). Let $\mathcal{S}_\G$ denote the collection of all $\G$-admissible signals. \\
Let $A_1,\cdots,A_k$ be $n\times n$ matrices with real entries. We call a matrix \textit{stable} if all its eigenvalues have negative real part, and a matrix is called \textit{unstable} if it has atleast one eigenvalue with positive real part, and no eigenvalue with zero real part.\\\\ 
For $\sigma\in \mathcal{S}_\G$, consider the switched linear system in $\mathbb{R}^n$ given by 
\begin{equation}\label{main} 
	x^\prime(t) = A_{\sigma(t)}x(t), \ \ t\geq 0.
\end{equation} 
The system~\ref{main} is called \textit{a switched system with a $\G$-admissible signal $\sigma\in \mathcal{S}_\G$}.\\
For each $i\geq 1$, the linear system $x^\prime(t) = A_{\sigma_i}x(t)$, $t\in [t_{i-1},t_i)$, 
is called a subsystem of~\ref{main}. A subsystem is known as \textit{stable} if $A_{\sigma_i}$ is a stable matrix. If $A_{\sigma_i}$ is an unstable matrix, the subsystem is known as \textit{unstable}. Throughout this article, we will assume that for each $j\in\mathbf{k}$, $A_j$ is a diagonalizable (over $\mathbb{C}$) matrix, see~\ref{diag} about the diagonalizability hypothesis. We consider the real Jordan form $A_j=P_jD_jP_j^{-1}$, where the columns of $P_j$ are the eigenvectors of $A_j$ with unit norm. The matrices $A_1,\cdots,A_k$ are called subsystem matrices of the switched system.

\begin{rem}
\label{zeno}
If the switching times have an accumulation point, we say that the system exhibits zeno behavior. Examples of such a behavior are given in~\cite[Section 1.2.2]{Lib}. Observe that if the sequence $(t_n)$ is infinite and is bounded above, then it has an accumulation point. In this article, we will assume that the zeno behavior does not occur, and $t_n\ra\infty$, as $n\ra\infty$. 
\end{rem}

\begin{rem}\label{diag}
If $A=PDP^{-1}$ is a diagonalizable matrix, then $\Vert e^{Ds}\Vert \leq e^{\lambda s}$, where \\
$\lambda=\max\{\textit{real part of eigenvalues of}\ A\}$. If $A=PDP^{-1}$ is not diagonalizable, then for each $\lambda^*>\max\{\textit{real part of eigenvalues of}\ A\}$, there exists $\beta>0$ such that $\Vert e^{Ds}\Vert \leq \beta e^{\lambda^* s}$. All the estimates obtained in this paper will include $\lambda^*$ and $\beta$ corresponding to each $A_i$, when the matrices are non-diagonalizable.
\end{rem}

\begin{exam} 
Consider a uni-directional cycle $\mathcal{G}$ with $k$ vertices. That is, $\mathcal{E}(\mathcal{G})=\{(i,i+1), (k,1)\ \vert\ i=1,\cdots,k-1\}$. Thus if $\sigma_n=r(<k)$, then the only choice for $\sigma_{n+1}$ is $r+1$, and if $\sigma_n=k$, then the only choice for $\sigma_{n+1}$ is $1$. Hence any $\G$-admissible signal $\sigma\in\mathcal{S}(\G)$ satisfies $(\sigma_n)_{n\geq 1}=\overline{r(r+1)\cdots k1\cdots (r-1)}$, for $r\in\{1,\cdots,k\}$.
\end{exam}  

\subsection{Stability of a switched system}
A graph-dependent switched system~(\ref{main}) with $\sigma\in\mathcal{S}_{\G}$ is \textit{asymptotically stable} if for all initial conditions $x(0)\in \mathbb{R}^n$, $\lim_{t\ra \infty}\Vert x(t)\Vert= 0$. \\
In this article, for a given digraph $\G$, we will consider the problem of characterizing $\G$-admissible signals for which the switched system given by~\ref{main} is asymptotically stable. Since we are restricting ourselves to linear systems, and the eigenvalues of $A_1,\cdots,A_k$ are away from the imaginary axis, asymptotic stability is the only kind of stability which is possible. In~\ref{stable}, we consider switched systems in which all the subsystems are stable and in~\ref{stable-unstable}, the switched systems have both stable and unstable subsystems.

\section{Switched system with all stable subsystems}
\label{stable}
Let $\mathcal{G}$ be a directed graph with $k$ vertices $v_1,\cdots, v_k$. Consider the switched system~\ref{main} with $\sigma\in \mathcal{S}_\G$. \\
In this section, we will assume that each $A_j$ is Hurwitz, that is, each subsystem of~\ref{main} is stable. It is known that there may exist signals $\sigma$ (with all-to-all connected underlying graph) for which the switched system~\ref{main} is not stable, we refer to~\cite{Lib} for examples. It is also known that if the time interval between consecutive switches is bounded below by a sufficiently large quantity (known as the dwell time), then the switched system is stable, see for example~\cite{O1}. 
\noindent For each $i\in\bf{k}$, let $-\lambda_i$ be the maximum of the real part of eigenvalues of $A_i$. Note that the eigenvalue(s) of $A_i$ with real part $-\lambda_i$ is the one closest to the imaginary axis.\\
\noindent For $t\in [t_{n-1},t_n)$, the solution of the switched system~\ref{main} with initial condition $x(0)$ is given by $x(t)=
e^{A_{\sigma_n}(t-t_{n-1})}e^{A_{\sigma_{n-1}}(t_{n-1}-t_{n-2})}\cdots e^{A_{\sigma_{1}}t_{1}} x(0)$.

\noindent Thus we have
\begin{equation} \label{main2'}
\begin{split}
\Vert x(t)\Vert &= \Vert e^{A_{\sigma_n}(t-t_{n-1})}e^{A_{\sigma_{n-1}}(t_{n-1}-t_{n-2})}\cdots e^{A_{\sigma_{1}}t_{1}} x(0)\Vert   \\ 
&= \Vert P_{\sigma_n}e^{D_{\sigma_n}(t-t_{n-1})}P_{\sigma_n}^{-1}\left(\prod_{j=1}^{n-1}P_{\sigma_{n-j}}e^{D_{\sigma_{n-j}}(t_{n-j}-t_{n-j-1})}P_{\sigma_{n-j}}^{-1}\right) x(0)\Vert  \\
&\leq  \Vert P_{\sigma_n}\Vert \Vert P_{\sigma_1}^{-1}\Vert e^{-\lambda_{\sigma_n}(t-t_{n-1})}\left(\prod_{j=1}^{n-1}\Vert
P_{\sigma_{j+1}}^{-1}P_{\sigma_{j}}\Vert e^{-\lambda_{\sigma_j}(t_j-t_{j-1})}\right)\Vert x(0)\Vert \\ 
&\leq \rho e^{-\lambda_{\sigma_n}(t-t_{n-1})} \left(\prod_{j=1}^{n-1}\Vert P_{\sigma_{j+1}}^{-1}P_{\sigma_{j}}\Vert e^{-\lambda_{\sigma_j}(t_j-t_{j-1})}\right)\Vert x(0)\Vert, 
\end{split}
\end{equation} 
where $\rho = \max\{\Vert P_{j}^{-1}\Vert \Vert P_{i}\Vert\ \vert\ \text{there is a path in $\G$ from $v_i$ to $v_j$},\ i, j\in\mathbf{k}\}$, 
which depends on the graph $\mathcal{G}$, but is independent of the signal $\sigma$. 

\begin{rems}\label{rem1}
1) If $\mathcal{G}$ is strongly connected then $\rho=\max_{i,j\in\mathbf{k}}\Vert P_{j}^{-1}\Vert \Vert P_{i}\Vert$.\\
2) If $\G$ has no loops, then every path in $\G$ has length atmost $k=\vert \G\vert$. Hence, any switching signal is eventually constant. Thus every switched system with a $\G$-admissible signal is stable. Moreover, if the graph $\mathcal{G}$ has a vertex $v_\ell$ with zero outdegree, and a signal $\sigma$ assumes the value $\ell$, then the switched system is stable. Thus, we will restrict our attention to graphs in which each vertex has non-zero outdegree. It should be noted that such graphs have atleast one simple loop since the number of vertices is finite. \\
3) If $\rho<1$, the last inequality in~\ref{main2'} gives $\Vert x(t)\Vert\leq \rho^n \Vert x(0)\Vert$, for all $t\in [t_{n-1},t_n)$. Hence the switched system is stable.\\
4) If $\G$ has a loop, then $\rho\geq 1$, since for any invertible matrices $A, B$, $\Vert A\Vert\Vert B^{-1}\Vert \Vert B\Vert\Vert A^{-1}\Vert\geq 1$. 
\end{rems}

\noindent In view of the above remarks, we assume the following hypothesis:\\
\noindent \textbf{(H1)} The underlying graph $\G$ has a loop, and for given $T>0$, $\sigma$ is discontinuous at some $t\geq T$.\\

\noindent Let $\mathcal{G}$ have $p$ simple loops, $s_1,\cdots,s_p$. For $t\in [t_{n-1},t_n)$, the last inequality in~\ref{main2'} gives $\Vert x(t)\Vert \leq a_n^\sigma \rho \Vert x(0)\Vert$, where 
\begin{eqnarray}\label{ansigma}
a_n^\sigma &=& \prod_{j=1}^{n-1}\Vert
	P_{\sigma_{j+1}}^{-1}P_{\sigma_{j}}\Vert e^{-\lambda_{\sigma_j}(t_j-t_{j-1})}. 
\end{eqnarray}
	
\subsection{Standard Decomposition Algorithm}\label{stdec}
For a given graph $\G$ with vertices $\{v_1,\cdots,v_k\}$, consider a $\G$-admissible switching signal $\sigma\in \mathcal{S}(\G)$, with associated switching times $(t_n)_{n\geq 1}$ and an infinite path $(\sigma_n)_{n\geq 1}$ in $\G$, with edges $e_n=(v_{\sigma_n},v_{\sigma_{n+1}})$, $n\geq 1$. To each edge $e_n$, we associate the time $\delta_n=t_{n+1}-t_n$, which is the time that the signal spends in the $\sigma_n^{th}$ subsystem before it switches to the $\sigma_{n+1}^{th}$ subsystem. The standard decomposition algorithm of $\sigma^{(n)}=\sigma_1\sigma_2\cdots\sigma_n$ is as follows: \\
\noindent \textbf{Step 1}: Let $P_0=\sigma_1\sigma_2\cdots\sigma_n$ with edges $e_1, e_2,\cdots, e_{n-1}$, and let $i(P_0)$ denote the set consisting of subscripts $j$ of all $e_j$ that appear in $P_0$. Let $k_2\in i(P_0)$ be the minimum index such that $\sigma_{k_2}=\sigma_{j+1}$ for some $j<k_2$ in the index set $i(P_0)$. Let $k_1\in i(P_0)$ be such that $k_1<k_2$ and $\sigma_{k_1}=\sigma_{k_2+1}$. If such a pair does not exist, then the path $P_0$ is indecomposable and the algorithm stops. Otherwise, we proceed to Step 2. It is easy to see that the subpath $P^0=\sigma_{k_1}\sigma_{k_1+1}\cdots \sigma_{k_2}$ with edges $e_{k_1},\cdots,e_{k_2-1}$ of $P_0$ is a simple loop in $\G$. The total time spent by $\sigma$ on the simple loop $P^0$ is given by $\delta_{k_1}+\cdots+\delta_{k_2-1}$.\\
\noindent \textbf{Step 2}: Let $P_1=P_0\setminus P^0$ be the path obtained by deleting the edges of $P^0$ from $P_0$. If $P_1$ is indecomposable, the algorithm stops, otherwise repeat Step 1 by replacing $P_0$ by $P_1$. \\
Using this algorithm, $\sigma^{(n)}$ can be decomposed into simple loops and an indecomposable path. Such a decomposition is called the \textit{standard decomposition}.  
\begin{figure}[h!]
\centering
\includegraphics[width=.5\textwidth]{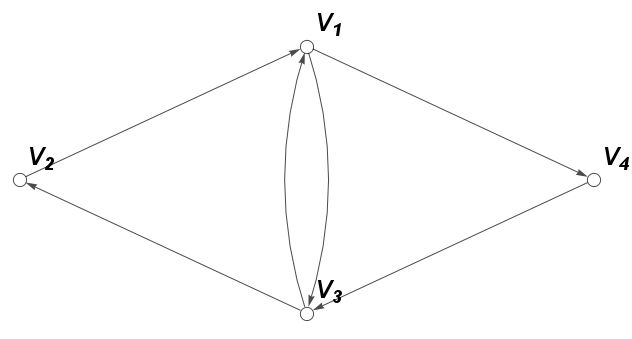}
\caption{The underlying graph $\G$ in~\ref{standard}}
\label{adj_exam11}
\end{figure}

\begin{exam}\label{standard}
In this example, we will illustrate the standard decomposition algorithm. Let $\G$ be the graph given in~\ref{adj_exam11}. There are four simple loops in $\G$, namely $s_1=\{v_1\ra v_3\ra v_2\ra v_1\}$, $s_2=\{v_1\ra v_4\ra v_3\ra v_1\}$, $s_3=\{v_1\ra v_4\ra v_3\ra v_2\ra v_1\}$, and $s_4=\{v_1\ra v_3\ra v_1\}$. Consider a $\G$-admissible signal $\sigma\in\mathcal{S}_\G$ with $\sigma^{(13)}=(\sigma_n)_{1\leq n\leq 13}=(2,1,3,1,4,3,2,1,4,3,1,4,3)$. \\
As shown in Table~\ref{table}, the standard decomposition of $\sigma^{(13)}$ is obtained in three steps (a-c), and is given by three simple loops $e_2e_3=s_4$, $e_1e_4e_5e_6=s_3$, $e_8e_9e_{10}=s_2$, and an indecomposable path $e_7e_{11}e_{12}$. The total time spent by $\sigma$ on the simple loops in the standard decomposition of $\sigma^{(13)}$ is given by $\delta_2+\delta_3$ on $e_2e_3=s_4$, $\delta_1+\delta_4+\delta_5+\delta_6$ on $e_1e_4e_5e_6=s_3$, and $\delta_8+\delta_9+\delta_{10}$ on $e_8e_9e_{10}=s_2$.
\end{exam}

\begin{table} 
\centering
\caption{Standard Decomposition Algorithm applied to $\sigma^{(14)}$.}
\begin{tabular}{|c c c c c c c c c c c c c|}
 \hline
$\sigma_1$ & $\sigma_2$ & $\sigma_3$ & $\sigma_4$ &$\sigma_5$ & $\sigma_6$ &$\sigma_7$ & $\sigma_8$ &$\sigma_9$ & $\sigma_{10}$ &$\sigma_{11}$ & $\sigma_{12}$ &$\sigma_{13}$  \\
 \hline
2 & 1 & 3 & 1 & 4 & 3 & 2 & 1 & 4 & 3 & 1 & 4 & 3 \\
\hline
\end{tabular}

\begin{tabular}{|c c c c c c c c c c c c|}
 \hline
$e_1$ & $e_2$ & $e_3$ & $e_4$ &$e_5$ & $e_6$ &$e_7$ & $e_8$ &$e_9$ & $e_{10}$ &$e_{11}$ & $e_{12}$   \\
 \hline
(2,1) & (1,3) & (3,1) & (1,4) & (4,3) & (3,2) & (2,1) & (1,4) & (4,3) & (3,1) & (1,4) & (4,3) \\
 \hline
\end{tabular}
\bigskip
\subcaption*{(a) After removing edges $e_2$ and $e_3$ corresponding to $s_4$ from $\sigma^{(14)}$}
\begin{tabular}{|c c c c c c c c c c c c|}
 \hline
$e_1$ & \textcolor{white}{$e_2$} & \textcolor{white}{$e_3$} & $e_4$ &$e_5$ & $e_6$ &$e_7$ & $e_8$ &$e_9$ & $e_{10}$ &$e_{11}$ & $e_{12}$   \\
 \hline
(2,1) & \textcolor{white}{(1,3)} & \textcolor{white}{(3,1)} & (1,4) & (4,3) & (3,2) & (2,1) & (1,4) & (4,3) & (3,1) & (1,4) & (4,3) \\
 \hline
\end{tabular}
\bigskip
\subcaption*{(b) After removing edges $e_1$, $e_4$, $e_5$, and $e_6$ corresponding to $s_3$ from the path obtained in Step 1}
\begin{tabular}{|c c c c c c c c c c c c|}
 \hline
\textcolor{white}{$e_1$} & \textcolor{white}{$e_2$} & \textcolor{white}{$e_3$} & \textcolor{white}{$e_4$} & \textcolor{white}{$e_5$} & \textcolor{white}{$e_6$} &$e_7$ & $e_8$ &$e_9$ & $e_{10}$ &$e_{11}$ & $e_{12}$   \\
 \hline
\textcolor{white}{(2,1)} & \textcolor{white}{(1,3)} & \textcolor{white}{(3,1)} & \textcolor{white}{(1,4)} & \textcolor{white}{(4,3)} & \textcolor{white}{(3,2)} & (2,1) & (1,4) & (4,3) & (3,1) & (1,4) & (4,3) \\
 \hline
\end{tabular}
\bigskip
\subcaption*{(c) After removing edges $e_8$, $e_9$, and $e_{10}$ corresponding to $s_2$ from the path obtained in Step 2}
\begin{tabular}{|c c c c c c c c c c c c|}
 \hline
\textcolor{white}{$e_1$} & \textcolor{white}{$e_2$} & \textcolor{white}{$e_3$} & \textcolor{white}{$e_4$} & \textcolor{white}{$e_5$} & \textcolor{white}{$e_6$} &$e_7$ & \textcolor{white}{$e_8$} & \textcolor{white}{$e_9$} & \textcolor{white}{$e_{10}$} &$e_{11}$ & $e_{12}$   \\
 \hline
\textcolor{white}{(2,1)} & \textcolor{white}{(1,3)} & \textcolor{white}{(3,1)} & \textcolor{white}{(1,4)} & \textcolor{white}{(4,3)} & \textcolor{white}{(3,2)} & (2,1) & \textcolor{white}{(1,4)} & \textcolor{white}{(4,3)} & \textcolor{white}{(3,1)} & (1,4) & (4,3) \\
 \hline
\end{tabular}
\label{table}
\end{table}

\begin{rems}\label{standrem}
If the length of a path is atleast $\vert\G\vert+1$ long, then the path is always decomposable. Further, the set of simple loops in the standard decomposition of $\sigma^{(n+1)}$ contains the set of simple loops in the standard decomposition of $\sigma^{(n)}$.\\
The standard decomposition algorithm respects the direction of the path in accordance with the signal $\sigma$. If the time dependence of the path $\sigma^{(n)}$ is ignored, there are several ways of decomposing it into simple loops and an indecomposable path. For example, another decomposition of $\sigma^{(13)}$ given in Example~\ref{standard} is given by three simple loops $e_{10}e_2=s_4$, $e_1e_4e_5e_6=s_3$, $e_8e_9e_3=s_2$, and an indecomposable path $e_7e_{11}e_{12}$.
\end{rems}

\subsection{Classes of switching signals} \label{class}
We will consider two classes of switching signals in $\mathcal{S}_{\G}$:
\begin{eqnarray*}
\mathcal{S}_{\G}(\tau) &=& \{\sigma\in \mathcal{S}_\G\ \vert \ t_{n+1}-t_n\geq \tau, \ n\geq 0\},\\
\mathcal\mathcal{S}_{\G}(\tau_1,\cdots,\tau_p) &=& \left\{\sigma\in \mathcal{S}_\G\ \vert \ \textit{the total time spent by the signal $\sigma$ on each simple loop $s_i$ } \right.\\
&& \left. \textit{in the standard decomposition of $\sigma^{(n)}$, for all $n\geq 1$, is atleast} \ \tau_i, \ i\in\mathbf{p}\right\}.
\end{eqnarray*}
\noindent For signals in $\mathcal{S}_{\G}(\tau)$, $\tau$ is the dwell time. In $\mathcal\mathcal{S}_{\G}(\tau_1,\cdots,\tau_p)$, $\tau_i$ will be known as the \emph{simple loop dwell time} on $s_i$.

\begin{exam}\label{examnew1}
Let $\G$ and $\sigma\in\mathcal{S}_\G$ be as in~\ref{standard} with the standard decomposition of $\sigma^{(13)}$. If the signal $\sigma$ belongs to the class $\mathcal{S}_\G(\tau_1,\cdots,\tau_4)$ in $\mathcal{S}_\G$, then $\delta_2+\delta_3\geq \tau_4$, $\delta_1+\delta_2+\delta_5+\delta_6\geq \tau_3$, and $\delta_8+\delta_9+\delta_{10}\geq \tau_2$.
\end{exam}

\begin{rem}\label{dwell}
In the literature, several lower bounds on the dwell time $\tau$ are obtained. In Theorem 1,~\cite{KS}, it is proved that for $\mu_\G = \max_{(r,s)\in\mathcal{E}(\mathcal{G})}\frac{\ln \Vert P_{s}^{-1}P_{r}\Vert}{\lambda_r}$, if $\tau>\mu_\G$, then the switched system~\ref{main} with switching signal in $\mathcal{S}_{\G}(\tau)$ is stable. In~\cite{O1}, a tighter lower bound for $\tau$ in terms of the maximum cycle ratio is obtained, given by $\rho^*$ and $\rho_2^*$ (for planar systems), where 
\[
\rho^*=\max_{i\in\mathbf{p}}\dfrac{\sum_{(r,s)\in \mathcal{E}(s_i)} \ln \Vert P_{s}^{-1}P_{r}\Vert}{\sum_{(r,s)\in \mathcal{E}(s_i)}\lambda_{r}},\ \rho_2^*=\max_{(i,j)\in \mathcal{E}(\G)}\dfrac{\ln \Vert P_j^{-1}P_i\Vert + \ln\Vert P_i^{-1}P_j\Vert}{\lambda_i+\lambda_j}.
\]
For planar systems, $\rho_2^*\leq \rho^*$.
\end{rem}

\noindent For a loop $C$ in the graph $\mathcal{G}$, let $-\lambda_{C} = \max_{(i,j)\in\mathcal{E}(C)}\{-\lambda_i\}>0$, and define 
\begin{eqnarray*}
\nu_C&=&\frac{\sum_{(r,s)\in \mathcal{E}(C)} \ln \Vert P_{s}^{-1}P_{r}\Vert}{\lambda_{C}}.
\end{eqnarray*}

\noindent For each \textit{simple loop} $s_i$ in the graph $\mathcal{G}$, let $\nu_i=\nu_{s_i}$. Note that if $C=n_1s_1+n_2s_2+\cdots+n_ps_p$, then $\nu_C\geq \sum_{i=1}^p n_i \nu_i$. Moreover, if $C=n_is_i$, for some $i$, then $\nu_C=n_i \nu_i$. Further, if $C$ is a self-loop, then $\nu_C=0$. It should also be noted that 
\[
\rho^* \leq \max_{i\in\textbf{p}} \dfrac{\sum_{(r,s)\in \mathcal{E}(s_i)} \ln \Vert P_{s}^{-1}P_{r}\Vert}{\ell(s_i)\lambda_{s_i}}=\max_{i\in\textbf{p}} \dfrac{\nu_{i}}{\ell(s_i)}.
\]
Moreover if $\G$ has only one loop, say $s$, and $\lambda_i=\lambda_j$, for all $(i,j)\in \mathcal{E}(s)$, then $\ell(s)\rho^*=\nu_s$.\\

\noindent We now prove our main theorem which gives lower bounds on the simple loop dwell times $\tau_i$ for stability of the switched system~\ref{main} with signal $\sigma\in \mathcal\mathcal{S}_{\G}(\tau_1,\cdots,\tau_p)$.

\begin{thm}\label{thmloop}
The switched system~\ref{main} with switching signal in $\mathcal\mathcal{S}_{\G}(\tau_1,\cdots,\tau_p)$ is stable if for each $i\in\mathbf{p}$, $\tau_i>\nu_i$.
\end{thm}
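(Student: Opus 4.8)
The plan is to show that the prefactor $a_n^\sigma$ from~\eqref{ansigma} satisfies $a_n^\sigma\to 0$ as $n\to\infty$. Since $\|x(t)\|\le a_n^\sigma\rho\|x(0)\|$ for $t\in[t_{n-1},t_n)$, and under \textbf{(H1)} together with the no-zeno assumption of Remark~\ref{zeno} we have $n\to\infty$ as $t\to\infty$, this gives $\|x(t)\|\to 0$, i.e.\ asymptotic stability. (If $\sigma$ had only finitely many discontinuities it would be eventually constant on a stable subsystem, so stability would be immediate; thus \textbf{(H1)} is the case to treat.) Taking logarithms,
\[
\ln a_n^\sigma=\sum_{j=1}^{n-1}\Bigl(\ln\|P_{\sigma_{j+1}}^{-1}P_{\sigma_j}\|-\lambda_{\sigma_j}(t_j-t_{j-1})\Bigr),
\]
and the entire proof reduces to estimating this sum from above by regrouping its terms according to the standard decomposition of $\sigma^{(n)}$ into simple-loop copies and one residual indecomposable path.

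First I would split the edge index set $\{1,\dots,n-1\}$ into the edges forming the simple-loop copies output by the algorithm and the edges of the residual indecomposable path. For a single extracted copy of a simple loop $s_i$, its edges run exactly once through $\mathcal{E}(s_i)$, so the norm terms sum to $\sum_{(r,s)\in\mathcal{E}(s_i)}\ln\|P_s^{-1}P_r\|=\lambda_{s_i}\nu_i$. For the decay terms, each source rate obeys $\lambda_{\sigma_j}\ge\lambda_{s_i}=\min_{v\in v(s_i)}\lambda_v>0$, and the total time the signal spends on that copy is at least $\tau_i$ by membership in $\mathcal{S}_\mathcal{G}(\tau_1,\dots,\tau_p)$; hence the decay sum over the copy is at least $\lambda_{s_i}\tau_i$. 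Thus each copy of $s_i$ contributes at most $\lambda_{s_i}(\nu_i-\tau_i)=-\lambda_{s_i}(\tau_i-\nu_i)<0$ to $\ln a_n^\sigma$, where positivity of $\tau_i-\nu_i$ uses the hypothesis $\tau_i>\nu_i$. Setting $\varepsilon=\min_{i\in\mathbf{p}}\lambda_{s_i}(\tau_i-\nu_i)>0$, every simple-loop copy contributes at most $-\varepsilon$. The residual indecomposable path has distinct vertices, hence at most $k-1$ edges; discarding its nonpositive decay terms and summing finitely many $\ln\|P_s^{-1}P_r\|$ bounds its contribution above by a constant $M$ depending only on $\mathcal{G}$.

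Combining, if $N_n$ is the number of simple-loop copies in the standard decomposition of $\sigma^{(n)}$, then $\ln a_n^\sigma\le -\varepsilon N_n+M$. It remains to show $N_n\to\infty$: the indecomposable remainder uses at most $k-1$ of the $n-1$ edges and each simple loop uses at most $k$ edges, so $N_n\ge (n-k)/k\to\infty$; here I would invoke Remark~\ref{standrem}, that every path of length at least $|\mathcal{G}|+1$ is decomposable, to guarantee that the residual never exceeds $k-1$ edges. Therefore $\ln a_n^\sigma\to-\infty$, whence $a_n^\sigma\to 0$ and $\|x(t)\|\to 0$. The main obstacle I anticipate is the decomposition bookkeeping rather than the estimates: one must verify that the edges partition cleanly into loop copies plus a uniformly bounded residual, that the time bound $\ge\tau_i$ supplied by the class is genuinely a \emph{per-copy} statement (an aggregate-per-loop-type bound would fail, since a loop recurring $m$ times would then contribute $m\lambda_{s_i}\nu_i$ on the positive side), and that $N_n$ indeed grows without bound; once these are settled the conclusion follows by the term-by-term comparison above.
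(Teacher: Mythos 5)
Your proof is correct and follows essentially the same route as the paper: regrouping $\ln a_n^\sigma$ via the standard decomposition, bounding each simple-loop copy's contribution by $-\lambda_{s_i}(\tau_i-\nu_i)<0$ using $\lambda_{\sigma_j}\ge\lambda_{s_i}$ and the per-copy time bound $\tau_i$, bounding the indecomposable residual by a constant, and letting the number of loop copies diverge. Your version is in fact slightly tighter in its bookkeeping than the paper's (which writes the regrouping as an equality rather than an inequality and only asserts $n_i^\sigma\to\infty$ for some $i$, whereas you introduce the uniform gap $\varepsilon$ and the explicit count $N_n\ge(n-k)/k$), but the underlying argument is the same.
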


\begin{proof}
The standard decomposition of $\sigma^{(n)}=\sigma_1\cdots \sigma_n$ is a disjoint union of simple loops $s_i$, $n_i^\sigma$ times, for $i\in\mathbf{p}$, and an indecomposable path $p_n^\sigma$ of length
atmost $k-1$ (where $k=\vert \G\vert$). As done in~\cite{O1}, we distribute the terms in $\ln a_n^\sigma$ for each loop $s_i$ and path $p_n^\sigma$ to obtain 
\begin{equation} \label{main2}
\ln a_n^\sigma = \sum_{j=1}^{n-1} \left( \ln \Vert P_{\sigma_{j+1}}^{-1}P_{\sigma_{j}}\Vert - \lambda_{\sigma_j}(t_j-t_{j-1})\right) = b_n^\sigma + \sum_{i=1}^{p} n_i^\sigma \left( \sum_{(r,s)\in s_i} \ln \Vert P_{s}^{-1}P_{r}\Vert - \lambda_{s_i} \tau_{i}\right), 
\end{equation} 
where $b_n^\sigma$ are the terms corresponding to the path $p_n^\sigma$. \\ 
If $\tau_{i}>\frac{\sum_{(r,s)\in \mathcal{E}(s_i)} \ln \Vert P_{s}^{-1}P_{r}\Vert}{\lambda_{s_i}}$, for $i=1,\cdots,p$, then each term in the bracket in~\ref{main2} is negative. Moreover, as $n\ra\infty$, the number of simple loops $n_i^\sigma\ra\infty$, for some $i=1,\cdots,p$ (since the number of nodes in $\mathcal{G}$ are finite and $t_n\ra \infty$, see~\ref{zeno}), and $b_n^\sigma$ is a finite quantity for each $\sigma$ and $n$. Hence $\lim_{t\ra\infty} \Vert x(t)\Vert = 0$.
\end{proof}

\begin{rems}
(1) In the proof of~\ref{thmloop}, the standard decomposition of $\sigma^{(n)}$ does not play any role, except for the definition of $\mathcal\mathcal{S}_{\G}(\tau_1,\cdots,\tau_p)$. For example, take the standard decomposition and the other decomposition of $\sigma^{(n)}$ given in Remark~\ref{standrem}. The total time spent on the simple loop $s_4=e_2e_3=e_{10}e_2$ will depend on the chosen edge $e_2$ or $e_{10}$. This gives more flexibility to choosing a signal to ensure stability of the switched system, but finding all such decompositions is not easy. Infact the difficulty increases with $n$. \\
(2) If $\max_{(r,s)\in\mathcal{E}(\mathcal{G})}\ln \Vert P_{s}^{-1}P_{r}\Vert\leq 0$, the switched system is always stable by~\ref{dwell}. Moreover, if the maximum is non-positive, then $\sum_{(r,s)\in\mathcal{E}(\mathcal{G})}\ln \Vert P_{s}^{-1}P_{r}\Vert\leq 0$, and thus the switched system is stable by~\ref{thmloop}. Thus, we will assume that \mbox{$\max_{(r,s)\in\mathcal{E}(\mathcal{G})}\ln \Vert P_{s}^{-1}P_{r}\Vert> 0$} to obtain a non-trivial result. \\
(3) Since~\ref{thmloop} gives a lower bound on the total time spent on each simple loop, the signal can switch slowly on some edges on a simple loop, and faster on some of its other edges. This gives rise to signals with a combination of slow and fast switching. \\
(4) Along the lines of the proof of~\ref{thmloop}, we see that if the total time $\tau_c$ spent by the signal $\sigma$ on every loop $c$ in $\mathcal{G}$ satisfies $\tau_c > \nu_c$, then the switched system~\ref{main} is stable. Thus, the signal can adjust fast switches on some simple loops by switching slowly on its other constituting simple loops.  \\
(5) If each $A_i$ is a diagonal matrix, then $P_i=I$, hence $\nu_i=0$. Therefore, the switched system will always be stable for any switching signal. This can be directly seen from the first equality in~\ref{main2'}.
\end{rems}

\begin{rem}\label{unit} 
For $j=1,\cdots,k$, let $P_j'$ be another matrix of unit norm eigenvectors of $A_j$, that is $A_j=P_j'D_jP_j'^{-1}$, then there exists a unitary matrix $U_j$ such that $P_j'=P_jU_j$. Since the spectral norm is unitarily invariant, $\Vert P_i^{-1}P_j\Vert = \Vert U_i^{-1}P_i'^{-1}P_jU_j\Vert = \Vert P_i'^{-1}P_j'\Vert$, for $i,j=1,\cdots,k$. Thus the bounds obtained above will not depend on the choice of eigenvector matrix $P_j$ with unit norm eigenvectors. This remark is applicable throughout this paper. An appropriate scaling of eigenvector matrices will be used in Section~\ref{approach2} to obtain meaningful results.
This observation was used in~\cite{O1} and~\cite{KS} to obtain tighter bounds on the dwell time (in the case of all stable subsystems).
\end{rem}

%
%

\begin{exam}
Let $\G$ be the graph given in~\ref{adj_exam1}. There are three simple loops in $\G$, namely $s_1=\{v_1\ra v_2\ra v_3\ra v_1\}$, $s_2=\{v_3\ra v_4\ra v_3\}$, and $s_1=\{v_3\ra v_5\ra v_6\ra v_3\}$. Consider a switched system~\ref{main} on $\mathbb{R}^2$, with a $\G$-admissible signal $\sigma\in\mathcal{S}_\G(\tau_1,\tau_2,\tau_3)$, with subsystem matrices   
\begin{eqnarray*}
& & A_1 =\left(
\begin{array}{c@{\enskip}c}
-1.5&0\\
0&-1.5
\end{array}\right), 
A_2 =\left(
\begin{array}{c@{\enskip}c}
-1&0\\
1&-1
\end{array}\right), \ A_3 = \left(
\begin{array}{c@{\enskip}c}
-11&3\\
-18&4
\end{array}\right), \\
& & A_4= \left(
\begin{array}{c@{\enskip}c}
3&-45\\
1&-11
\end{array}\right), 
A_5 = \left(
\begin{array}{c@{\enskip}c}
3&-46\\
1&-11
\end{array}\right),
A_6 = \left(
\begin{array}{c@{\enskip}c}
-2.1&1\\0&-2.1
\end{array}\right).
\end{eqnarray*}
\begin{figure}[h!]
\centering
\includegraphics[width=.5\textwidth]{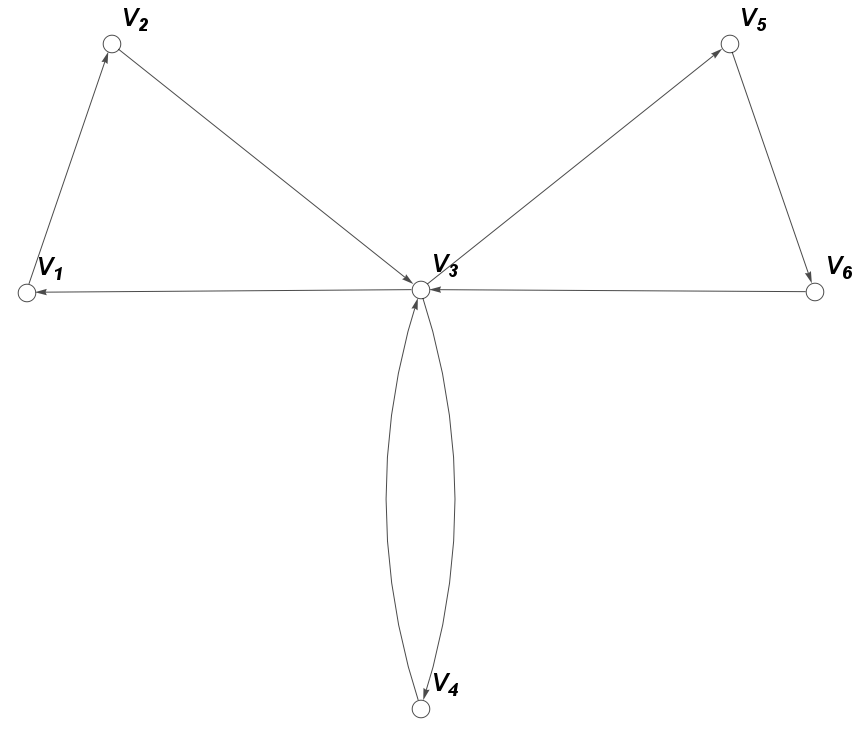}
\caption{The underlying graph $\G$}
\label{adj_exam1}
\end{figure}

\noindent Here $\nu_1 =2.64412$, $\nu_2 =2.73448$, and $\nu_3=2.89594$. By~\ref{thmloop}, the switched system is stable if $\tau_i>\nu_i$, for $i=1,2,3$. For this planar system, $\rho_2^*=1.32088$, $\rho^*=1.36724$. If $\rho_2^*$ is taken as the minimum dwell time, then the total time spent on simple loop $s_3$ is atleast $3\rho_2^*=3.96264$, which is greater than $\nu_3$.
\end{exam}

\section{Switched system with both stable and unstable subsystems}\label{stable-unstable}

In this section, we will consider graph-dependent switched systems which have both stable and unstable subsystems. Consider the switched system~\ref{main} with the following hypothesis, along with (H1):\\
\noindent \textbf{(H2)} $A_1,\cdots,A_r$ are stable diagonalizable matrices (over $\mathbb{C}$) and $A_{r+1},\cdots,A_k$ are unstable diagonalizable matrices (over $\mathbb{C}$). \\

\noindent See~\ref{diag} for diagonalizability hypothesis. For $i=1,\cdots,r$, $j=r+1,\cdots,k$, let $-\lambda_i=\max\{\textit{real part of eigenvalues of}\ A_i\}$, and $\mu_j=\max\{\textit{real part of eigenvalues of}\ A_j\}$. \\

\noindent The next example shows that a graph-dependent switched system can be stable even if some of its subsystems are unstable. 

\begin{exam}\label{unidir}
Let $\mathcal{G}$ be a unidirectional ring with two vertices. Let $A_1$ be a stable and $A_2$ be an unstable matrix, both diagonalizable (over $\mathbb{C}$). Let $-\lambda<0$ be the real part of the eigenvalue of $A_1$ closest to the imaginary axis. Let $\mu>0$ be the real part of the eigenvalue of $A_2$ farthest from the imaginary axis. For any $\G$-admissible switching signal $\sigma$, $(\sigma_n)_{n\geq 1}=1212\cdots$, or $2121\cdots$. In particular, let $(\sigma_n)_{n\geq 1} = 1212\cdots$, and assume that for every $m\geq 0$, $t_{2m+1}-t_{2m}\geq \tau$ and $t_{2m+2}-t_{2m+1}\leq \eta$, for some $\tau>0$ and $\eta>0$. \\
For $n$ even and $t\in [t_{n},t_{n+1})$, 
\begin{equation} \label{main3}
\Vert x(t)\Vert = \Vert e^{A_{1}(t-t_{n})}e^{A_{2}(t_{n}-t_{n-1})}\cdots e^{A_{1}t_{1}} x(0)\Vert \leq  \alpha e^{-\lambda\tau} \rho^{n/2} e^{n\beta/2}\Vert x(0)\Vert,
\end{equation}
and for $n$ odd and $t\in [t_{n},t_{n+1})$,
\begin{equation} \label{main4}
\Vert x(t)\Vert = \Vert e^{A_{2}(t-t_{n})}e^{A_{1}(t_{n}-t_{n-1})}\cdots e^{A_{1}t_{1}} x(0)\Vert\leq  \alpha e^{\beta} \rho^{n/2} e^{n\beta/2}\Vert x(0)\Vert,
\end{equation}
where, $\rho=\Vert P_2^{-1} P_1\Vert\Vert P_1^{-1} P_2\Vert\geq 1$, $\alpha = \max\{\Vert P_2\Vert \Vert P_1^{-1}\Vert, \Vert P_1\Vert \Vert P_1^{-1}\Vert\}$, and $\beta = -\lambda\tau+\mu\eta<0$. \\
The right hand side of~\ref{main3} and~\ref{main4} goes to 0, as $n\ra\infty$, if 
\begin{equation}\label{cond12new}
\ln\rho+\beta<0.
\end{equation}
This condition will be obtained for the signal $(\sigma_n)_{n\geq 1}=2121\cdots$ as well.\\

\noindent For the special case, when $A_1$ (stable) and $A_2$ (unstable) commute with each other, there exists an invertible matrix $P$ that simultaneously diagonalizes $A_1$ and $A_2$. Thus, $\rho=1$ (notation as above). For simplicity, assume that all the eigenvalues of both $A_1$ and $A_2$ are real (the complex case is more technical, but similar). Let $D_1=diag(\alpha_1,\cdots,\alpha_n)$, and $D_2=diag(\beta_1,\cdots,\beta_n)$ be such that $A_i=PD_iP^{-1}$, $i=1,2$. Then for the $\G$-admissible signal $\sigma\in\mathcal{S}_\G(\tau,\eta)$, the switched system~\ref{main} is stable if $\max\{\alpha_i\tau+\beta_i\eta\}<0$, for all $i=1,\cdots,n$. Further, if one of the $\beta_i$ is negative, then the corresponding condition is true for any choice of $\eta,\tau>0$, since $\alpha_i<0$. Note that the condition~\ref{cond12new} imply the set of conditions obtained here, since $\alpha_i\leq -\lambda$ and $\beta_i\leq \mu$. Moreover, if for the same index $i$, $\alpha_i= -\lambda$ and $\beta_i= \mu$, then the conditions obtained are same as~\ref{cond12new}. 
\end{exam}

\noindent For an arbitrary graph $\G$ with $k$ vertices, let $\sigma$ be a $\G$-admissible signal, and let the hypothesis (H2) be satisfied. Observe that if there exist a $T>0$ such that $\sigma(t)\in\{1,\cdots,r\}$, for all $t\geq T$, then the results from~\ref{stable} are applicable for the switched system for $t\geq T$. Moreover, if one of the vertices $v_i$, for $i\in \{r+1,\cdots,k\}$ does not have a (outgoing) directed path to any of the vertices $\{v_1,\cdots,v_r\}$, then for any $\G$-admissible signal which assumes the value $i$, the corresponding switched system will not be asymptotically stable. \\

\noindent In view of these observations, we will assume that the underlying graph $\G$ satisfies the following: for each $i\in\{r+1,\cdots,k\}$, there exist $j\in\{1,\cdots, r\}$ (depending on $i$) such that there is a path from $v_i$ to $v_j$, and for each $i\in\{1,\cdots,r\}$, there exist $j\in\{r+1,\cdots, k\}$ (depending on $i$) such that there is a path from $v_i$ to $v_j$. Moreover, we will assume that the $\G$-admissible signal $\sigma$ satisfies the following hypothesis:\\
\noindent \textbf{(H3)} For every $T>0$, there exists $t, s>T$ such that $\sigma(t)\in\{1,\cdots,r\}$ and $\sigma(s)\in\{r+1,\cdots,k\}$.

\subsection{Classes of Switching Signals}
Let us define the following collection of signals in $\mathcal{S}_{\G}$ satisfying (H2): for $\tau>0$ and $\eta>0$,
\begin{eqnarray*}
\mathcal{S}_{\G}(\tau,\eta) &=& \{\sigma\in \mathcal{S}_\G\ \vert \ t_{n+1}-t_n\geq \tau,\ \textit{if}\ \sigma(t_n)\in \{1,\cdots,r\},\ \textit{and}\\
& & \ t_{n+1}-t_n\leq \eta,\ \textit{if}\ \sigma(t_n)\in \{r+1,\cdots,k\}\}.
\end{eqnarray*}

\noindent For signals in $\mathcal{S}_{\G}(\tau,\eta)$, $\tau$ is known as the \textit{dwell time}, which is the minimum time that the signal spends in a stable subsystem, and $\eta$ will be called the \textit{flee time}, which is the maximum time that the signal spends in an unstable subsystem. \\

\noindent We now define another collection of signals in $\mathcal{S}_{\G}$ satisfying (H1) and (H2) in terms of the simple loops in $\G$. We know that each path has a standard decomposition into simple loops and an indecomposable path, see Section~\ref{stdec}. Let $s_1,\cdots,s_p$ be the simple loops in $\G$. Then each $s_i$ can have stable and unstable vertices (corresponding to stable and unstable subsystems, respectively). For a signal $\sigma$, let $\sigma^{(n)}=\sigma_1\cdots\sigma_n$ (as before). For every $n\geq 1$, under the standard decomposition of $\sigma^{(n)}$, let the signal spends atleast $\tau_i$ time on the stable vertices of $s_i$, and atmost $\eta_i$ time on the unstable vertices of $s_i$, $i\in\textbf{p}$. Let $\mathcal{S}_{\mathcal{G}}(\tau_1,\eta_1,\cdots,\tau_p,\eta_p)$ denote the collection of all such signals.

\begin{exam}\label{exam42}
Let $\G$ and $\sigma\in\mathcal{S}_\G$ be as in~\ref{standard} with the standard decomposition of $\sigma^{(13)}$. Let $A_1, A_2$ be stable matrices and $A_3, A_4$ be unstable matrices satisfying the usual diagonalizability hypothesis (H2). If the signal $\sigma$ belongs to the class $\mathcal{S}_\G(\tau_1,\eta_1,\cdots,\tau_4,\eta_4)$ in $\mathcal{S}_\G$, then $\delta_2\geq \tau_4$, $\delta_3\leq \eta_4$, $\delta_1+\delta_2\geq \tau_3$, $\delta_5+\delta_6\leq \eta_3$, and $\delta_8+\delta_9+\delta_{10}\leq \eta_2$.
\end{exam}

\subsection{Unidirectional Ring} \label{uni}
\ref{unidir} can be generalized to a unidirectional ring $\mathcal{G}$ with $k$ vertices and $A_1,\cdots,A_k$ satisfying hypothesis (H2). 

\begin{prop}\label{unidirthmnew1}
With the notation as above, the switched system~\ref{main} with $\G$-admissible signal $\sigma\in\mathcal{S}_{\G}(\tau,\eta)$ is stable, if $\eta>0$ and $\tau>0$ satisfy
\[
\ln\rho+\beta<0,
\]
where $\rho=\prod_{(i,j)\in\mathcal{E}(\mathcal{G})}\Vert P_j^{-1} P_i\Vert$, $\beta=-\left( \sum_{i=1}^r\lambda_i\right)\tau + \left(\sum_{j=r+1}^k \mu_j\right) \eta$. 
\end{prop}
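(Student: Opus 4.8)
The plan is to lift the two-vertex computation of Example~\ref{unidir} to an arbitrary ring, the key point being the rigid periodic structure of $\G$-admissible signals on a unidirectional ring. First I would record that on such a ring every admissible signal is forced to cycle through the vertices in order, say $(\sigma_n)_{n\ge 1}=1,2,\dots,k,1,2,\dots$ up to the choice of starting vertex, so that $\sigma_{j+k}=\sigma_j$ and the edges $e_j=(\sigma_j,\sigma_{j+1})$ satisfy $e_{j+k}=e_j$ for all $j$. Consequently any block of $k$ consecutive edges $e_{a+1},\dots,e_{a+k}$ traverses the whole edge set $\mathcal{E}(\G)$ exactly once. This is the structural fact that lets the factor $\rho=\prod_{(i,j)\in\mathcal{E}(\G)}\|P_j^{-1}P_i\|$ come out of the estimate cleanly.

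Next I would run the norm estimate exactly as in the first lines of~\ref{main2'}, stopping before the all-stable substitution since here some subsystems are unstable. For $t\in[t_{n-1},t_n)$ this gives $\|x(t)\|\le \alpha\,\|e^{D_{\sigma_n}(t-t_{n-1})}\|\prod_{j=1}^{n-1}\|P_{\sigma_{j+1}}^{-1}P_{\sigma_j}\|\,\|e^{D_{\sigma_j}(t_j-t_{j-1})}\|\,\|x(0)\|$, where $\alpha=\max_{i,j}\|P_i\|\|P_j^{-1}\|$ bounds the boundary factor $\|P_{\sigma_n}\|\|P_{\sigma_1}^{-1}\|$. By Remark~\ref{diag} each exponential factor is controlled according to the type of $\sigma_j$: for a stable index $\sigma_j\le r$ with $t_j-t_{j-1}\ge\tau$ one has $\|e^{D_{\sigma_j}(t_j-t_{j-1})}\|\le e^{-\lambda_{\sigma_j}(t_j-t_{j-1})}\le e^{-\lambda_{\sigma_j}\tau}$, while for an unstable index $\sigma_j>r$ with $t_j-t_{j-1}\le\eta$ one has $\|e^{D_{\sigma_j}(t_j-t_{j-1})}\|\le e^{\mu_{\sigma_j}(t_j-t_{j-1})}\le e^{\mu_{\sigma_j}\eta}$, using that $\lambda_{\sigma_j}>0$ and $\mu_{\sigma_j}>0$.

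The crux is then to group the product $\prod_{j=1}^{n-1}$ into blocks of $k$ consecutive edges. Writing $n-1=mk+s$ with $0\le s<k$, each of the $m$ full blocks contributes exactly $\prod_{(i,j)\in\mathcal{E}(\G)}\|P_j^{-1}P_i\|=\rho$ from the $P$-factors and at most $\prod_{i=1}^r e^{-\lambda_i\tau}\prod_{j=r+1}^k e^{\mu_j\eta}=e^{\beta}$ from the exponential factors, so each block contributes at most $\rho e^{\beta}=e^{\ln\rho+\beta}$. The leftover $s<k$ factors together with the leading factor $\|e^{D_{\sigma_n}(t-t_{n-1})}\|$ are bounded by a constant $K$ independent of $n$, since every $P$-factor is uniformly bounded and every exponential factor is at most $1$ on stable indices and at most $e^{\mu_j\eta}$ on unstable ones. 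This yields $\|x(t)\|\le C\,(\rho e^{\beta})^{m}\|x(0)\|$ with $C=\alpha K$.

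Finally, under the no-Zeno assumption of Remark~\ref{zeno} we have $t\to\infty\Rightarrow n\to\infty$, and by the forced periodicity of the ring signal this forces $m\to\infty$; hence if $\ln\rho+\beta<0$, equivalently $\rho e^{\beta}<1$, then $(\rho e^{\beta})^{m}\to0$ and $\|x(t)\|\to0$, giving asymptotic stability. I expect the only delicate point to be the bookkeeping guaranteeing that each length-$k$ block reproduces the full edge product $\rho$ exactly (so that the $\rho$ factors cleanly) while the boundary and remainder terms stay uniformly bounded; the periodicity of admissible signals on a ring makes this routine rather than a genuine obstacle, so the real content is the structural observation of the first paragraph.
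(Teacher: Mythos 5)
Your proposal is correct and is essentially the paper's own argument: the paper offers no separate proof of Proposition~\ref{unidirthmnew1}, stating only that Example~\ref{unidir} "can be generalized" to a $k$-vertex ring, and your proof carries out exactly that generalization — exploiting the forced periodicity of admissible signals on the ring, bounding each length-$k$ block of edges by $\rho e^{\beta}$, and absorbing the boundary and remainder factors into a uniform constant. The bookkeeping you flag (each block reproducing the full edge product $\rho$, stable factors bounded by $1$ and unstable ones by $e^{\mu_j\eta}$ in the remainder) is handled correctly, so nothing is missing.
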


\noindent The unidirectional ring $\G$ has only one simple loop, which is $\G$ itself. Let $s_1=\G$.

\begin{prop}\label{unidirthmnew2}
With the notation as above, the switched system~\ref{main} with $\G$-admissible signal $\sigma\in \mathcal{S}_{\G}(\tau_1,\eta_1)$ is stable, if $\tau_1,\eta_1>0$ satisfy
\[
\ln\rho-\lambda \tau_1+\mu \eta_1<0,
\]
where $\rho=\prod_{(i,j)\in\mathcal{E}(\mathcal{G})}\Vert P_j^{-1} P_i\Vert$, $-\lambda=\max\{-\lambda_1,\cdots,-\lambda_r\}$, and $\mu=\max\{\mu_{r+1},\cdots,\mu_k\}$. 
\end{prop}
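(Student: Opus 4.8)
The plan is to exploit the rigidity of the unidirectional ring, so that the argument reduces to a per-loop contraction estimate exactly parallel to Example~\ref{unidir} and Proposition~\ref{unidirthmnew1}. Since every vertex of $\G$ has a single outgoing edge, the only $\G$-admissible infinite paths cycle through $v_1,\dots,v_k$ repeatedly in order; consequently the standard decomposition of $\sigma^{(n)}$ peels off one copy of the single simple loop $s_1=\G$ for each completed traversal of the ring, and membership $\sigma\in\mathcal{S}_\G(\tau_1,\eta_1)$ says precisely that on each such copy the total time spent on stable vertices is at least $\tau_1$ and on unstable vertices is at most $\eta_1$. First I would start from the explicit solution $x(t)=e^{A_{\sigma_n}(t-t_{n-1})}\cdots e^{A_{\sigma_1}t_1}x(0)$, insert the factorizations $A_{\sigma_j}=P_{\sigma_j}D_{\sigma_j}P_{\sigma_j}^{-1}$, telescope the interior $P$-matrices into change-of-basis terms $\Vert P_{\sigma_{j+1}}^{-1}P_{\sigma_j}\Vert$ as in~\ref{main2'}, and apply Remark~\ref{diag} blockwise: each stable block contributes a decay factor $e^{-\lambda_{\sigma_j}\delta_j}$ and each unstable block a growth factor $e^{\mu_{\sigma_j}\delta_j}$.

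Next I would group the resulting product by complete loops. Over one full traversal the change-of-basis terms multiply to $\prod_{(i,j)\in\E(\G)}\Vert P_j^{-1}P_i\Vert=\rho$. The stable blocks contribute $\exp\!\big(-\sum_{\text{stable}}\lambda_{\sigma_j}\delta_j\big)$, which by $\lambda_{\sigma_j}\geq\lambda$ and the constraint that the total stable time on the loop is $\geq\tau_1$ is bounded by $e^{-\lambda\tau_1}$; the unstable blocks contribute $\exp\!\big(\sum_{\text{unstable}}\mu_{\sigma_j}\delta_j\big)$, which by $\mu_{\sigma_j}\leq\mu$ and total unstable time $\leq\eta_1$ is bounded by $e^{\mu\eta_1}$. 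Hence each completed loop contributes a factor at most $\gamma:=\rho\,e^{-\lambda\tau_1+\mu\eta_1}$, and the hypothesis $\ln\rho-\lambda\tau_1+\mu\eta_1<0$ is exactly the statement $\gamma<1$. This extremal-rate aggregation, replacing the per-vertex sums $\sum\lambda_i,\sum\mu_j$ of Proposition~\ref{unidirthmnew1} by the single worst rates $\lambda,\mu$, is the only substantive difference from the earlier bound and is forced by the total-time-per-loop definition of $\mathcal{S}_\G(\tau_1,\eta_1)$.

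To conclude, writing $m=m(n)$ for the number of complete loops inside $\sigma^{(n)}$, I would obtain $\Vert x(t)\Vert\leq C\gamma^{m}\Vert x(0)\Vert$, where the constant $C$ absorbs the boundary factor $\Vert P_{\sigma_n}\Vert\,\Vert P_{\sigma_1}^{-1}\Vert$ and the one leftover partial loop. Under (H1), (H3), absence of zeno behavior, and $t_n\to\infty$ (Remark~\ref{zeno}), the ring forces $m\to\infty$ as $t\to\infty$, so $\gamma^{m}\to0$ and $\Vert x(t)\Vert\to0$. The one point needing care is checking that $C$ is finite independently of $n$: I expect this to be the main obstacle, but it is mild here, because the terminal incomplete loop is an initial segment of the next loop, whose total unstable time is capped at $\eta_1$ once completed; therefore the unstable growth accumulated in the leftover is already at most $e^{\mu\eta_1}$, while its stable blocks contribute at most $1$, giving a uniform bound on $C$ and completing the argument.
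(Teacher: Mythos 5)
Your proof is correct and takes essentially the route the paper intends: the paper gives no separate proof of Proposition~\ref{unidirthmnew2}, presenting it as the $k$-vertex generalization of the two-vertex computation in Example~\ref{unidir} (and of the loop-wise grouping in the proof of Theorem~\ref{thmloop}), which is exactly your per-loop contraction factor $\gamma=\rho\,e^{-\lambda\tau_1+\mu\eta_1}<1$ applied once per completed traversal of the ring, with a uniformly bounded boundary term. Your handling of the leftover partial loop (its accumulated unstable time, including the running interval, is capped by $\eta_1$ because it is an initial segment of a loop whose completion is constrained by the definition of $\mathcal{S}_{\G}(\tau_1,\eta_1)$) is a detail the paper glosses over but is consistent with, and completes, the same argument.
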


\begin{exam}
Let $\G$ be the unidirectional ring $\{v_1\ra v_2\ra v_3\ra v_1\}$. Consider a switched system~\ref{main} on $\mathbb{R}^2$, with a $\G$-admissible signal $\sigma\in \mathcal{S}_{\G}$, with
\begin{eqnarray*}
A_1 =\left(
\begin{array}{c@{\enskip}c}
-2&0\\
0&-2
\end{array}\right), 
A_2 =\left(
\begin{array}{c@{\enskip}c}
-0.4&-0.03\\
1.43&0.4
\end{array}\right), 
A_3 = \left(
\begin{array}{c@{\enskip}c}
0.9&0\\
2&-0.6
\end{array}\right).
\end{eqnarray*}
%
\noindent Here $A_1, A_2$ are stable and $A_3$ is unstable. According to the~\ref{unidirthmnew1}, the switched system~\ref{main} with the switching signal $\sigma\in \mathcal{S}_{\G}(\tau,\eta)$ is stable if $\eta>0, \tau>0$ satisfy $3.38306-2.1 \tau + 0.9 \eta<0$.
\noindent Further, using~\ref{unidirthmnew2}, the switched system~\ref{main} with the switching signal $\sigma\in \mathcal{S}_{\G}(\tau_1,\eta_1)$ is stable if $\tau_1,\eta_1>0$ satisfy $3.38306-0.1 \tau + 0.9 \eta<0$.
\end{exam}

%

\subsection{Bipartite Graph}\label{bip}
In this section, we state stability results of the switched system~\ref{main} with a bipartite underlying graph $\G$, with stable and unstable vertices (corresponding to stable and unstable subsystems).

\begin{prop}\label{bipthm}
Let $\mathcal{G}$ be a bipartite graph with disjoint classes $\{v_1,\cdots,v_r\}$ and \\
$\{v_{r+1},\cdots,v_k\}$. Consider the switched system~\ref{main} with $\sigma\in\mathcal{S}_{\G}(\tau,\eta)$, and the subsystems satisfying hypothesis (H2). The switched system is stable if
\[
\ln\rho_1+\ln\rho_2+\beta<0,
\]
where $\rho_!=\max_{(i,j)\in\mathcal{E}(\G), i=1,\cdots,r}\Vert P_j P_i^{-1}\Vert$, $\rho_2=\max_{(i,j)\in\mathcal{E}(\G), i=r+1,\cdots,k}\Vert P_j P_i^{-1}\Vert$, $\beta = -\lambda\tau+\mu\eta$, $-\lambda=\max\{-\lambda_1,\cdots,-\lambda_r\}$, and $\mu=\max\{\mu_{r+1},\cdots,\mu_k\}$.
\end{prop}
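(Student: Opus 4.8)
The plan is to exploit the defining feature of the bipartite graph: since every edge of $\G$ joins the stable class $\{v_1,\dots,v_r\}$ to the unstable class $\{v_{r+1},\dots,v_k\}$, every $\G$-admissible path $(\sigma_n)$ strictly alternates between stable and unstable vertices. Consequently the edges of the path alternate between edges whose source vertex is stable (call these type-1) and edges whose source vertex is unstable (type-2), and I would bound the propagator over each consecutive type-1/type-2 pair by the single factor $\rho_1\rho_2 e^{\beta}$.

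First I would rerun the estimate that produced \eqref{main2'}, now retaining the unstable blocks: for $t\in[t_{n-1},t_n)$,
\[
\Vert x(t)\Vert \le C_n\, \prod_{j=1}^{n-1}\Vert P_{\sigma_{j+1}}^{-1}P_{\sigma_j}\Vert\,\Vert e^{D_{\sigma_j}(t_j-t_{j-1})}\Vert\,\Vert x(0)\Vert,
\]
where $C_n=\Vert P_{\sigma_n}\Vert\,\Vert P_{\sigma_1}^{-1}\Vert\,\Vert e^{D_{\sigma_n}(t-t_{n-1})}\Vert$, and $\Vert e^{D_{\sigma_j}(t_j-t_{j-1})}\Vert\le e^{-\lambda_{\sigma_j}(t_j-t_{j-1})}$ when $\sigma_j$ is stable, while $\Vert e^{D_{\sigma_j}(t_j-t_{j-1})}\Vert\le e^{\mu_{\sigma_j}(t_j-t_{j-1})}$ when $\sigma_j$ is unstable (Remark \ref{diag}). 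A type-1 factor has a stable source with dwell time $\ge\tau$, hence is bounded by $\rho_1 e^{-\lambda\tau}$; a type-2 factor has an unstable source with dwell time $\le\eta$, hence is bounded by $\rho_2 e^{\mu\eta}$, using the worst-case rates $-\lambda=\max\{-\lambda_i\}$ and $\mu=\max\{\mu_j\}$. The prefactor is uniformly bounded: $\Vert P_{\sigma_n}\Vert\,\Vert P_{\sigma_1}^{-1}\Vert\le\max_i\Vert P_i\Vert\cdot\max_j\Vert P_j^{-1}\Vert$, and the partially completed final interval contributes at most $e^{\mu\eta}$ (or $1$ if $\sigma_n$ is stable), so $C_n\le C_0$ independently of $n$ and $\sigma$.

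Next I would group the product $\prod_{j=1}^{n-1}$ into consecutive pairs. By the strict alternation, each pair contains exactly one type-1 and one type-2 factor, so each pair is bounded by $\rho_1 e^{-\lambda\tau}\cdot\rho_2 e^{\mu\eta}=\rho_1\rho_2 e^{\beta}$ with $\beta=-\lambda\tau+\mu\eta$; at most one unpaired factor remains, bounded by $\max\{\rho_1,\rho_2 e^{\mu\eta}\}$, which I absorb into the constant. This gives $\Vert x(t)\Vert\le C\,(\rho_1\rho_2 e^{\beta})^{m}\,\Vert x(0)\Vert$ with $m=\lfloor (n-1)/2\rfloor$. Under (H1) together with the no-zeno assumption of Remark \ref{zeno}, $t\to\infty$ forces $n\to\infty$, hence $m\to\infty$; so if $\rho_1\rho_2 e^{\beta}<1$, equivalently $\ln\rho_1+\ln\rho_2+\beta<0$, then $\Vert x(t)\Vert\to 0$ for every initial condition, which is the asserted stability.

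The hard part is the bookkeeping of the alternation rather than any analytic estimate: I must verify that each period block genuinely pairs one stable-source and one unstable-source factor irrespective of whether the path begins on a stable or an unstable vertex, and that both the single leftover factor and the prefactor $C_n$ are bounded uniformly in $n$ (in particular that the incomplete terminal interval contributes at most $e^{\mu\eta}$). Everything downstream is the geometric-decay argument, which is routine once the per-period factor $\rho_1\rho_2 e^{\beta}$ is isolated. I also note that the per-edge factor arising naturally in \eqref{main2'} is $\Vert P_{\sigma_{j+1}}^{-1}P_{\sigma_j}\Vert$, so the maxima defining $\rho_1$ and $\rho_2$ should be read with that orientation.
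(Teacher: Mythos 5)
Your proof is correct and follows essentially the same route as the paper, which proves this proposition by reducing it to the alternating two-vertex ring estimate of Example~\ref{unidir}: bipartiteness forces strict alternation between stable-source and unstable-source edges, each consecutive pair of factors is bounded by $\rho_1\rho_2 e^{\beta}$, and geometric decay follows when $\ln\rho_1+\ln\rho_2+\beta<0$. Your closing observation is also right: the $\Vert P_j P_i^{-1}\Vert$ in the statement is a typo, and the maxima defining $\rho_1,\rho_2$ should be read with the orientation $\Vert P_j^{-1}P_i\Vert$ arising from~\ref{main2'}.
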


\begin{rem}
The proof of Proposition~\ref{bipthm} is similar to the case of unidirectional ring with two vertices considered in~\ref{unidir}. This is because all the stable subsystems and unstable subsystems are in the two disjoint classes of the underlying bipartite graph $\G$.
\end{rem}

%
%

\subsection{An arbitrary graph}\label{gen} 
Let $\G$ to be a digraph with $k$ vertices $\{v_1,\cdots,v_k\}$ with has $p$ simple loops $s_1,\cdots,s_p$. Let $\G_s$ and $\G_u$ be subgraphs of $\G$ with
\[
\E(\G_s) = \{(i,j)\in \E(\G)\ \vert\ i = 1,\cdots ,r \},\ \E(\G_u) = \{(i,j)\in \E(\G)\ \vert\ i=r+1,\cdots,k\}.
\]
Then $\G$ is a superimposition of $\G_s$ and $\G_u$. For $k=5$, $r=2$, an example of $\G$ with corresponding $\G_s$ and $\G_u$ is shown in Figure~\ref{adj_exam4}. Let the matrices $A_1,\cdots,A_k$ satisfy the hypothesis (H2). We obtain the following stability results.

\begin{prop}\label{bipthmnew1}
Consider the switched system~\ref{main} with $\sigma\in\mathcal{S}_{\G}(\tau,\eta)$, and the subsystems satisfying hypothesis (H2). The switched system is stable if for all $i=1,\cdots,p$,
\begin{equation}\label{bip1}
\ln\rho_i- \lambda_{s_i} \tau+ \mu_{s_i}\eta < 0,
\end{equation}
where, 
\[
\rho_i=\prod_{(u,v)\in \mathcal{E}(s_i)}\Vert P_v^{-1} P_u\Vert,\ \lambda_{s_i}=\sum_{(u,v)\in \mathcal{E}(s_i)\cap \mathcal{E}(\G_s)}\lambda_u, \ \text{and} \ \mu_{s_i}=\sum_{(u,v)\in \mathcal{E}(s_i)\cap \mathcal{E}(\G_u)}\mu_u.
\]
\end{prop}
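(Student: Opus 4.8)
The plan is to run the argument of Theorem~\ref{thmloop} essentially verbatim, the only new feature being that unstable subsystems contribute growing (rather than decaying) exponential factors, which must be controlled by the flee time $\eta$. First I would rewrite the basic estimate~\ref{main2'} for the mixed case. Proceeding exactly as in the derivation of~\ref{main2'}, for $t\in[t_{n-1},t_n)$ one expresses $x(t)$ as a product of blocks $P_{\sigma_j}e^{D_{\sigma_j}(t_j-t_{j-1})}P_{\sigma_j}^{-1}$, groups the factors $P_{\sigma_{j+1}}^{-1}P_{\sigma_j}$, and estimates each exponential block via Remark~\ref{diag}: for a stable index $\sigma_j\le r$ one has $\Vert e^{D_{\sigma_j}(t_j-t_{j-1})}\Vert\le e^{-\lambda_{\sigma_j}(t_j-t_{j-1})}$, while for an unstable index $\sigma_j>r$ one has $\Vert e^{D_{\sigma_j}(t_j-t_{j-1})}\Vert\le e^{\mu_{\sigma_j}(t_j-t_{j-1})}$. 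This yields
\[
\Vert x(t)\Vert\le C\,\rho\,a_n^\sigma\,\Vert x(0)\Vert,\qquad a_n^\sigma=\prod_{j=1}^{n-1}\Vert P_{\sigma_{j+1}}^{-1}P_{\sigma_j}\Vert\,e^{c_{\sigma_j}(t_j-t_{j-1})},
\]
where $c_u=-\lambda_u$ if $u\le r$ and $c_u=\mu_u$ if $u>r$, the constant $\rho$ is as in~\ref{main2'}, and $C$ absorbs the last (partial) interval factor $e^{c_{\sigma_n}(t-t_{n-1})}$; the latter is bounded by $e^{(\max_j\mu_j)\eta}$ since $t-t_{n-1}\le\eta$ on an unstable block and is $\le 1$ on a stable block.

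Next I would take logarithms and apply the standard decomposition of Section~\ref{stdec}, exactly as in~\ref{main2}: write $\sigma^{(n)}$ as a disjoint union of the simple loops $s_i$ occurring $n_i^\sigma$ times together with an indecomposable path $p_n^\sigma$ of length at most $k-1$, and distribute the summands of $\ln a_n^\sigma$ accordingly. The central step is to bound the contribution of one occurrence of $s_i$. For each edge $(u,v)\in\mathcal{E}(s_i)$, by the defining property of $\mathcal{S}_{\G}(\tau,\eta)$, the visit to subsystem $u$ lasts at least $\tau$ when $u$ is stable and at most $\eta$ when $u$ is unstable. Since $-\lambda_u<0$ and $\mu_u>0$, this gives $c_u(t_j-t_{j-1})\le-\lambda_u\tau$ on stable edges and $c_u(t_j-t_{j-1})\le\mu_u\eta$ on unstable edges, so one occurrence of $s_i$ contributes at most
\[
\sum_{(u,v)\in\mathcal{E}(s_i)}\ln\Vert P_v^{-1}P_u\Vert-\Big(\sum_{(u,v)\in\mathcal{E}(s_i)\cap\mathcal{E}(\G_s)}\lambda_u\Big)\tau+\Big(\sum_{(u,v)\in\mathcal{E}(s_i)\cap\mathcal{E}(\G_u)}\mu_u\Big)\eta=\ln\rho_i-\lambda_{s_i}\tau+\mu_{s_i}\eta.
\]
Summing over all occurrences gives $\ln a_n^\sigma\le b_n^\sigma+\sum_{i=1}^p n_i^\sigma(\ln\rho_i-\lambda_{s_i}\tau+\mu_{s_i}\eta)$, where $b_n^\sigma$ collects the terms of $p_n^\sigma$.

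Finally I would close the argument as in Theorem~\ref{thmloop}. Hypothesis~\ref{bip1} makes every bracket strictly negative. The remainder $b_n^\sigma$ is bounded above uniformly in $n$: the path $p_n^\sigma$ has at most $k-1$ edges, each $\ln\Vert P_{\sigma_{j+1}}^{-1}P_{\sigma_j}\Vert$ is bounded by a graph constant, the unstable edges contribute at most $\mu_{\sigma_j}\eta$, and the stable edges contribute non-positive terms. Because the graph has a loop (H1), there are finitely many vertices, and $t_n\ra\infty$ (no zeno behaviour, Remark~\ref{zeno}), the total number of loop occurrences grows without bound, so at least one $n_i^\sigma\ra\infty$; paired with a strictly negative coefficient this forces $\ln a_n^\sigma\ra-\infty$, hence $\Vert x(t)\Vert\ra 0$.

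I expect the only genuinely new point, compared with the all-stable Theorem~\ref{thmloop}, to be the control of the destabilizing positive contributions, which enter both inside each simple loop and inside the indecomposable remainder. The loop contributions are tamed by the flee-time bound $\eta$ built into $\mu_{s_i}\eta$ together with hypothesis~\ref{bip1}, whereas the remainder contributions are harmless precisely because the indecomposable path has bounded length $k-1$, so its positive part cannot accumulate as $n\ra\infty$. As in Theorem~\ref{thmloop}, the fact that the decomposition is the standard one is immaterial to the estimate itself; it serves only to make the class $\mathcal{S}_{\G}(\tau,\eta)$ and the per-edge inequalities meaningful.
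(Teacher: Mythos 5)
Your proof is correct and is exactly the argument the paper intends: the paper's own ``proof'' of Proposition~\ref{bipthmnew1} is a one-line remark that it ``follows from standard inequalities used before,'' meaning the estimate~\ref{main2'} together with the standard-decomposition bookkeeping from the proof of Theorem~\ref{thmloop}, which is precisely what you carried out. You also correctly supplied the two details that remark leaves implicit --- bounding the unstable factors by the flee time $\eta$ (both inside loops and in the final partial interval) and the uniform upper bound on the indecomposable remainder $b_n^\sigma$ --- so nothing is missing.
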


\begin{prop}\label{bipthmnew2}
Consider the switched system~\ref{main} with $\sigma\in\mathcal{S}_{\G}(\tau_1,\eta_1,\cdots,\tau_p,\eta_p)$, and subsystems satisfying hypothesis (H2). The switched system is stable if for all $i=1,\cdots,p$,
\begin{equation}\label{bip2}
\ln\rho_i-\lambda_{s_i}\tau_i+\mu_{s_i}\eta_i  < 0,
\end{equation}
where, 
\[
\rho_i=\prod_{(u,v)\in \mathcal{E}(s_i)}\Vert P_v^{-1} P_u\Vert,\ -\lambda_{s_i}=\max_{(u,v)\in \mathcal{E}(s_i)\cap \mathcal{E}(\G_s)}-\lambda_u, \ \text{and} \ \mu_{s_i}=\max_{(u,v)\in \mathcal{E}(s_i)\cap \mathcal{E}(\G_u)}\mu_u.
\]
\end{prop}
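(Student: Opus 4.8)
The plan is to follow the proof of Theorem~\ref{thmloop} almost verbatim, the only genuinely new ingredient being the positive exponents produced by the unstable vertices. First I would reproduce the chain of estimates leading to~\ref{main2'}, but tracking the sign of every exponent: on an edge $(u,v)$ the factor $\Vert e^{D_u\delta}\Vert$ is bounded by $e^{-\lambda_u\delta}$ when $u$ is stable and by $e^{\mu_u\delta}$ when $u$ is unstable, with $\delta$ the dwell time on that edge. This gives, for $t\in[t_{n-1},t_n)$, a bound $\Vert x(t)\Vert\le \rho\,a_n^\sigma\Vert x(0)\Vert$ with $\rho$ the usual graph constant and
\[
\ln a_n^\sigma=\sum_{j=1}^{n-1}\ln\Vert P_{\sigma_{j+1}}^{-1}P_{\sigma_j}\Vert-\sum_{\substack{1\le j\le n-1\\ \sigma_j\ \text{stable}}}\lambda_{\sigma_j}(t_j-t_{j-1})+\sum_{\substack{1\le j\le n-1\\ \sigma_j\ \text{unstable}}}\mu_{\sigma_j}(t_j-t_{j-1}).
\]

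Next I would apply the standard decomposition of $\sigma^{(n)}$ into the simple loops $s_i$ (each appearing $n_i^\sigma$ times) and an indecomposable residual path $p_n^\sigma$, and distribute the terms of $\ln a_n^\sigma$ over the loop copies and the residual exactly as in~\ref{main2}. For one copy of $s_i$ the loop contribution is $\sum_{(u,v)\in\mathcal{E}(s_i)}\ln\Vert P_v^{-1}P_u\Vert$ plus the stable-vertex exponents (negative) plus the unstable-vertex exponents (positive). On the stable edges I would use $\lambda_u\ge\lambda_{s_i}$ (since $-\lambda_{s_i}=\max(-\lambda_u)$) together with the lower bound $\tau_i$ on the total stable time of the copy; on the unstable edges I would use $\mu_u\le\mu_{s_i}$ together with the upper bound $\eta_i$ on the total unstable time. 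Each loop copy therefore contributes at most $\ln\rho_i-\lambda_{s_i}\tau_i+\mu_{s_i}\eta_i$, which is negative by~\ref{bip2}, and we obtain
\[
\ln a_n^\sigma\le b_n^\sigma+\sum_{i=1}^{p}n_i^\sigma\left(\ln\rho_i-\lambda_{s_i}\tau_i+\mu_{s_i}\eta_i\right),
\]
where $b_n^\sigma$ collects the residual terms.

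The main obstacle, and the only place where the argument really departs from Theorem~\ref{thmloop}, is a \emph{uniform} upper bound on $b_n^\sigma$. In the all-stable case $b_n^\sigma$ is bounded above for free because every exponent is nonpositive; here the residual path may visit unstable vertices, whose dwell times are \emph{not} directly constrained by the definition of $\mathcal{S}_{\mathcal{G}}(\tau_1,\eta_1,\cdots,\tau_p,\eta_p)$. I would resolve this as follows. By Remark~\ref{standrem} a simple loop, once it appears, is never deleted, while the residual path has length at most $k-1$; hence at most $k-1$ edges can stay in the residual for all $n$ (if $k$ such edges existed, then just after the last of them is created the residual would have length at least $k$). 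These finitely many permanent residual edges contribute a fixed finite amount. Every other edge is eventually absorbed into some copy of a loop $s_i$, and at that stage the defining constraint of the signal class forces the total unstable time of that copy, hence the dwell time on the edge itself, to be at most $\eta_i\le\max_\ell\eta_\ell$. Thus the positive residual contribution is bounded, uniformly in $n$, by a constant depending only on $\mathcal{G}$, the $\eta_\ell$, and the $\mu_j$, so that $\sup_n b_n^\sigma<\infty$.

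With $\sup_n b_n^\sigma<\infty$ and each bracket in the displayed inequality negative, the conclusion is identical to that of Theorem~\ref{thmloop}: as $n\to\infty$ at least one $n_i^\sigma\to\infty$ (the graph is finite and $t_n\to\infty$ by~\ref{zeno}), whence $\ln a_n^\sigma\to-\infty$ and $\Vert x(t)\Vert\to0$. I expect the residual bookkeeping to be the hard part, since the per-loop estimate is a routine sign-aware repetition of the all-stable computation once the extremal definitions $-\lambda_{s_i}=\max(-\lambda_u)$ and $\mu_{s_i}=\max\mu_u$ are correctly paired with the lower bound $\tau_i$ and the upper bound $\eta_i$.
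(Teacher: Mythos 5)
Your proof is correct and follows exactly the route the paper intends: the paper gives no separate proof of Proposition~\ref{bipthmnew2}, only the remark that it ``follows from standard inequalities used before,'' i.e.\ from the machinery of Theorem~\ref{thmloop}, and your per-loop estimate (pairing $\lambda_u\geq\lambda_{s_i}$ with the lower bound $\tau_i$ on total stable time, and $\mu_u\leq\mu_{s_i}$ with the upper bound $\eta_i$ on total unstable time) is precisely that argument with signs tracked. Your residual bookkeeping actually supplies something the paper leaves implicit: in Theorem~\ref{thmloop} the uniform bound on $b_n^\sigma$ is free because every exponent there is nonpositive, whereas here the residual path may contain unstable vertices whose dwell times the class $\mathcal{S}_{\G}(\tau_1,\eta_1,\cdots,\tau_p,\eta_p)$ does not directly constrain; your dichotomy (at most $k-1$ edges remain in the residual for all $n$, and every other edge is eventually absorbed into a loop copy, at which point the class constraint caps its dwell time by $\max_\ell\eta_\ell$) is the right way to close this, and it is genuinely needed. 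One small repair: your displayed bound $\Vert x(t)\Vert\leq\rho\,a_n^\sigma\Vert x(0)\Vert$ silently drops the current-interval factor, which in~\ref{main2'} is $e^{-\lambda_{\sigma_n}(t-t_{n-1})}\leq 1$ and can be discarded, but here equals $e^{\mu_{\sigma_n}(t-t_{n-1})}\geq 1$ when $\sigma_n$ is unstable; you must keep it, bound $t-t_{n-1}$ by the dwell time of the current edge, and note that this dwell time is bounded uniformly in $n$ by exactly your absorbed-or-permanent dichotomy, after which the conclusion $\ln a_n^\sigma\to-\infty$ and $\Vert x(t)\Vert\to 0$ stands unchanged.
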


\begin{figure}[htp]
\centering
\includegraphics[width=.32\textwidth]{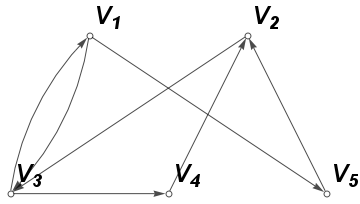}\hfill
\includegraphics[width=.32\textwidth]{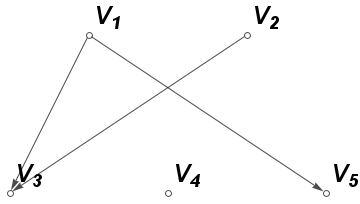}\hfill
\includegraphics[width=.32\textwidth]{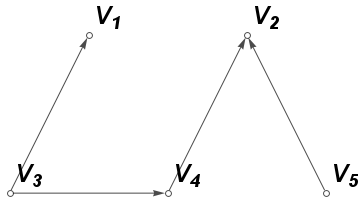}
\caption{(Left) Graph $\G$, (Center) Subgraph $\G_s$ of $\G$, (Right) Subgraph $\G_u$ of $\G$}
\label{adj_exam4}

\end{figure}

\begin{rem}
The proofs of Propositions~\ref{bipthmnew1} and \ref{bipthmnew2} follow from standard inequalities used before. Conditions~\ref{bip1} and~\ref{bip2} coincide for Example~\ref{unidir}, and we obtain~\ref{cond12new}. In~\ref{exam42}, we obtained the inequalities $\delta_1+\delta_2\geq \tau_3$ and $\delta_5+\delta_{6}\leq \eta_3$. Thus, if for the system given in that example, all the inequalities in~\ref{bip2} are satisfied, then the switched system is stable. Hence, the signal can switch non-uniformly on $e_1$ and $e_2$, while satisfying $\delta_1+\delta_2\geq \tau_3$, and also switch non-uniformly on $e_5$ and $e_6$, while satisfying $\delta_5+\delta_{6}\leq \eta_3$. This gives an advantage over setting a uniform dwell time and flee time, as obtained in~\ref{bipthmnew1}. 
\end{rem}

We now study stability of the switched system~\ref{main} with a $\G$-admissible signal $\sigma\in \mathcal{S}_{\G}(\tau,\eta)$ with two different approaches. The first approach, given in Section~\ref{approach1}, is to find sufficient conditions on the switching pattern of the signal $\sigma$ to ensure stability. In the second approach, given in Section~\ref{approach2}, it is assumed that $\G_u$ is acyclic, and we appropriately scale the eigenvector matrices $P_1,\cdots,P_k$ of $A_1,\cdots,A_k$, respectively (see~\ref{unit}), and find sufficient conditions on the dwell time $\tau>0$, and the flee time $\eta>0$ so that for each signal $\sigma\in\mathcal{S}_\G(\tau,\eta)$, the switched system~\ref{main} is stable. This uses the concept of topological sorting for acyclic graphs.

\subsubsection{The first approach}\label{approach1}
Using the first inequality of~\ref{main2'}, for $t\in [t_{n-1},t_n)$,
\begin{equation} \label{avdwell}
\begin{aligned}
\Vert x(t)\Vert &= \Vert e^{A_{\sigma_n}(t-t_{n-1})}e^{A_{\sigma_{n-1}}(t_{n-1}-t_{n-2})}\cdots e^{A_{\sigma_{1}}t_{1}} x(0)\Vert  \\ 
&\leq  \Vert P_{\sigma_n}\Vert \Vert P_{\sigma_1}^{-1}\Vert e^{-\lambda_{\sigma_n}(t-t_{n-1})}\left(\prod_{j=1}^{n-1}\Vert
P_{\sigma_{j+1}}^{-1}P_{\sigma_{j}}\Vert e^{-\lambda_{\sigma_j}(t_j-t_{j-1})}\right)\Vert x(0)\Vert \\ 
&\leq  C \left(e^{-\lambda_{\sigma_n}(t-t_{n-1})}\prod_{j=1}^{n}\Vert
P_{\sigma_{j+1}}^{-1}P_{\sigma_{j}}\Vert e^{-\lambda_{\sigma_j}(t_j-t_{j-1})}\right)\Vert x(0)\Vert \\ 
&\leq  C \rho_1^{N_s(0,t)}\rho_2^{N_u(0,t)} e^{-\lambda \tau N_{s}(0, t) + \mu \eta N_{u}(0, t)}\Vert x(0)\Vert,
\end{aligned}
\end{equation} 
where $C$ is a positive constant, $\rho_1=\max\{\Vert P_j^{-1} P_i\Vert\ \vert\ (i,j)\in\mathcal{E}(\G_s)\}$,\\ $\rho_2=\max\{\Vert P_j^{-1} P_i\Vert\ \vert\ (i,j)\in\mathcal{E}(\G_u)\}$, $N_s(0,t)=\#\{i\ \vert\ 1\leq i\leq n, \sigma_i\in \{1,\cdots,r\}\}$, and $N_u(0,t)=\#\{i\ \vert\ 1\leq i\leq n, \sigma_i\in \{r+1,\cdots,k\}\}$. Thus, we obtain the following result.

\begin{prop}\label{thmsu}
With the notation as above, the switched system~\ref{main} with the signal $\sigma\in \mathcal{S}_{\G}(\tau,\eta)$, and subsystems satisfying (H2) is stable if 
\begin{equation} \label{su}
\limsup_{t\ra\infty} \left(N_s(0,t)(\ln \rho_1-\lambda \tau) + N_u(0,t)(\ln \rho_2 + \mu \eta )\right) < 0,
\end{equation} 
\end{prop}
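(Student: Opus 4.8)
The plan is to read the required estimate straight off the inequality chain already derived in~\ref{avdwell} and then feed it the hypothesis~\ref{su}; no new machinery is needed. First I would record that the last line of~\ref{avdwell} is, after collecting the exponents, exactly
\[
\Vert x(t)\Vert \le C\, e^{g(t)}\Vert x(0)\Vert, \qquad g(t) = N_s(0,t)\big(\ln\rho_1-\lambda\tau\big) + N_u(0,t)\big(\ln\rho_2+\mu\eta\big),
\]
valid for every $t\in[t_{n-1},t_n)$ and every $n$, hence for all $t\ge 0$; the exponent $g(t)$ is precisely the quantity sitting inside the $\limsup$ in~\ref{su}. I would also note that $C$ is finite and independent of $t$ and of the particular signal, being a bound on the boundary factors $\Vert P_{\sigma_n}\Vert\,\Vert P_{\sigma_1}^{-1}\Vert$ appearing in~\ref{avdwell}, taken over the finitely many eigenvector matrices $P_1,\dots,P_k$ attached to $\G$.

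The core of the argument is then immediate. By the hypothesis~\ref{su}, $\limsup_{t\to\infty} g(t) = -c < 0$, so by the definition of the limit superior there are $\varepsilon\in(0,c)$ and $T>0$ with $g(t)\le -\varepsilon$ for all $t\ge T$; consequently $\Vert x(t)\Vert \le C e^{-\varepsilon}\Vert x(0)\Vert$ for every $t\ge T$. On the finite window $[0,T)$ only finitely many switches occur, since by Remark~\ref{zeno} the switching times satisfy $t_n\to\infty$ with no accumulation point; therefore $N_s(0,t)$ and $N_u(0,t)$ are bounded on $[0,T)$, $g$ is bounded above there, and setting $G=\sup_{t\ge 0} g(t)<\infty$ yields the uniform estimate $\Vert x(t)\Vert \le C e^{G}\Vert x(0)\Vert$ for all $t\ge 0$. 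This bounded-trajectory estimate — every solution stays within a fixed multiple of $\Vert x(0)\Vert$, and is eventually confined below $C e^{-\varepsilon}\Vert x(0)\Vert$ — is exactly the stability asserted by the proposition.

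The point I would stress is that the hypothesis is used at full strength precisely in the form stated: $\limsup g<0$ hands over the eventual bound $g\le-\varepsilon$, and that bound is all the conclusion requires. I would deliberately \emph{not} try to upgrade this to $g(t)\to-\infty$ (equivalently $\Vert x(t)\Vert\to 0$), since the hypothesis does not force it: the counts $N_s(0,t)$ and $N_u(0,t)$ can both grow without bound while their weighted combination stays pinned below $-\varepsilon$ (for instance oscillating between two negative values whenever the per-cycle increment of $g$ vanishes). The only genuine care needed is bookkeeping — confirming the uniformity and finiteness of $C$, and covering the initial interval $[0,T)$ via the no-Zeno assumption of Remark~\ref{zeno} — so there is no substantial obstacle; the weight of the result sits entirely in the derivation of~\ref{avdwell}, which is already in hand.
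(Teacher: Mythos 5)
Your reading of~\ref{avdwell} and your extraction of an eventual bound from the hypothesis follow the paper's route exactly: the paper's proof of Proposition~\ref{thmsu} is nothing more than the chain~\ref{avdwell}, which gives $\Vert x(t)\Vert\le C e^{g(t)}\Vert x(0)\Vert$ with $g(t)=N_s(0,t)(\ln\rho_1-\lambda\tau)+N_u(0,t)(\ln\rho_2+\mu\eta)$, followed by the assertion of the proposition. The gap is at the very last step. In this paper ``stable'' means asymptotically stable, i.e.\ $\lim_{t\ra\infty}\Vert x(t)\Vert=0$ for every initial condition; that is the only notion of stability defined (see the stability subsection, which also remarks that asymptotic stability is the only kind possible for these systems), and it is precisely the conclusion reached in the proof of Theorem~\ref{thmloop}. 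Your argument delivers only the uniform bound $\Vert x(t)\Vert\le Ce^{-\varepsilon}\Vert x(0)\Vert$ for $t\ge T$, i.e.\ boundedness of trajectories. Declaring this bound to be ``exactly the stability asserted by the proposition'' substitutes a strictly weaker notion of stability that the paper never uses, so as written the proposal does not prove the stated result.

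Your supporting observation --- that $\limsup_{t\ra\infty}g(t)<0$ does not force $g(t)\ra-\infty$, since $N_s(0,t)$ and $N_u(0,t)$ can both diverge while $g$ oscillates in a bounded negative band (e.g.\ when $\ln\rho_1-\lambda\tau>0$ and $\ln\rho_2+\mu\eta<0$ with alternating switches whose increments cancel) --- is mathematically correct, and it exposes a genuine imprecision in condition~\ref{su}. But the right response is to repair the hypothesis side, not to weaken the conclusion. The intended argument, parallel to Theorem~\ref{thmloop} where divergence of the loop counts $n_i^\sigma$ drives $\ln a_n^\sigma\ra-\infty$, is that~\ref{su} should force the exponent in~\ref{avdwell} to $-\infty$: under (H1), (H3) and the no-zeno assumption of Remark~\ref{zeno}, the number of switches $N_s(0,t)+N_u(0,t)\ra\infty$, so any reading of~\ref{su} in which the eventual negativity scales with the switch count (for instance $g(t)\le-\varepsilon\left(N_s(0,t)+N_u(0,t)\right)$ for all large $t$) yields $e^{g(t)}\ra 0$ and hence $\Vert x(t)\Vert\ra 0$. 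In a blind proof the honest options were either to prove convergence under such a strengthened reading, or to flag explicitly that the literal hypothesis is insufficient for the paper's notion of stability; silently downgrading ``stable'' to ``bounded'' is the one move that is not available.
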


\begin{rems}\label{app}
1) It is worth comparing Propositions~\ref{unidirthmnew1}, ~\ref{bipthm}, and~\ref{thmsu}.\\
2) Note that if the signal is such that $\sigma_n\in\{1,\cdots,r\}$, for all $n\geq N$, for some $N\geq 1$, then $N_u(0,t)\leq N$ and $\tau>\ln\rho_1/\lambda$ is necessary for~\ref{su}. Moreover, if the signal is such that $\sigma_n\in\{r+1,\cdots,k\}$, for all $n\geq N$, for some $N\geq 1$, then $N_s(0,t)\leq N$ and there is no choice of $\eta$ for which~\ref{su} is satisfied. See hypothesis (H3). \\
3) If $\rho_2\geq 1$, then for~\ref{su} to be satisfied, $\ln\rho_1-\lambda\tau$ must be negative, therefore $\tau>\ln\rho_1/\lambda$ is a necessary condition for~\ref{su}.  \\
4) The term in the big bracket of the second inequality in~\ref{avdwell} is bounded above by 
\[
\left(\prod_{\substack{j=1\\ \sigma_j\in\{1,\cdots,r\}}}^{n}\Vert
P_{\sigma_{j+1}}^{-1}P_{\sigma_{j}}\Vert e^{-\lambda_{\sigma_j}\tau}\right) \left(\prod_{\substack{j=1\\ \sigma_j\in\{r+1,\cdots,k\}}}^{n}\Vert
P_{\sigma_{j+1}}^{-1}P_{\sigma_{j}}\Vert e^{\mu_{\sigma_j}\eta}\right).
\] 
Thus, if $\rho_2<1$, then for 
\[
\tau>\max_{\substack{(i,j)\in\mathcal{E}(\G)\\
i\in\{1,\cdots,r\}}} \dfrac{\ln \Vert P_j^{-1}P_i\Vert}{\lambda_i}, \ \text{and}\ \eta<-\max_{\substack{(i,j)\in\mathcal{E}(\G)\\
i\in\{r+1,\cdots,k\}}} \dfrac{\ln \Vert P_j^{-1}P_i\Vert}{\mu_i},
\]
the switched system~\ref{main} with $\sigma\in\mathcal{S}(\tau,\eta)$ is stable. Note that these conditions imply~\ref{su}. Here we have obtained the lower bound $\mu_\G$ for $\tau$, as mentioned in~\ref{dwell}, when all the subsystems are stable. \\
5) If $\G$ is acyclic, then the hypothesis (H3) is not satisfied for any signal, since $\G$ will be a tree. Hence, for (H3) to be satisfied, $\G$ must necessarily contain a loop. See hypothesis (H1).\\
6) A similar result is obtained in~\cite{ZWXS} for switched positive linear systems with both stable and unstable subsystems.
\end{rems}

\begin{exam}\label{gen_exam1}
Let  the switched system~\ref{main} be defined on $\mathbb{R}^2$ and comprises of five subsystems  
\begin{eqnarray*}
A_1 =\left(
\begin{array}{c@{\enskip}c}
-2&0\\
0&-2
\end{array}\right), \
A_2 =\left(
\begin{array}{c@{\enskip}c}
-0.4&-0.03\\
1.4&0.04
\end{array}\right), \
A_3 = \left(
\begin{array}{c@{\enskip}c}
1&1\\
0&1
\end{array}\right),
\end{eqnarray*}
\begin{eqnarray*}
A_4 = \left(
\begin{array}{c@{\enskip}c}
0.1&0\\
0.1&0.2
\end{array}\right),\
A_5 = \left(
\begin{array}{c@{\enskip}c}
2&0.1\\0.1&2
\end{array}\right).
\end{eqnarray*}

Here $A_1, A_2$ are stable, and $A_3, A_4, A_5$ are unstable subsystems. Let the underlying graph $\G$ be as in~\ref{adj_exam4}. Here $\ln\rho_1/\lambda=3.45021$ and $\ln\rho_2/\mu=1.21024$. Thus, taking $\tau=4$ and $\eta=1$, the inequality~\ref{su} becomes 
\[
\limsup_{t\ra\infty}\dfrac{N_s(0,t)}{N_u(0,t)}>\dfrac{\ln\rho_2+\mu\eta}{-(\ln\rho_1-\lambda\tau)} = 84.4229.
\]
\end{exam} 

\subsubsection{The second approach}\label{approach2}

In this section, we assume that the subgraph $\G_u$ of $\G$ is acyclic and obtain bounds on $\tau$ and $\eta$ to ensure stability of the switched system~\ref{main} with all switching signals $\sigma\in\mathcal{S}_\G(\tau,\eta)$. Let $P_1,\cdots,P_k$ be any choice of matrices whose columns are eigenvectors of $A_1,\cdots,A_k$, respectively (not necessarily with unit norm). For $\sigma\in\mathcal{S}_{\G}(\tau,\eta)$, $t\in [t_n,t_{n+1})$,
\begin{eqnarray*} 
\Vert x(t)\Vert &=& \Vert e^{A_{\sigma_n}(t-t_{n})}e^{A_{\sigma_{n}}(t_{n}-t_{n-1})}\cdots e^{A_{\sigma_{1}}t_{1}} x(0)\Vert \nonumber \\ 
&\leq & \Vert P_{\sigma_n}\Vert \Vert P_{\sigma_1}^{-1}\Vert \Vert e^{D_{\sigma_n}(t-t_{n})}\Vert \left(\prod_{j=1}^{n}\Vert
P_{\sigma_{j+1}}^{-1}P_{\sigma_{j}}\Vert \Vert e^{D_{\sigma_j} (t_j-t_{j-1})}\Vert\right)\Vert x(0)\Vert  \\ 
&\leq& C_1 C_2\ a_n^{\sigma} \Vert x(0)\Vert,
\end{eqnarray*} 
where $C_1 = \max\{\Vert P_{j}\Vert \Vert P_{i}^{-1}\Vert\ \vert\ \text{there is a path in $\G$ from $v_i$ to $v_j$}, (i,j)\in\mathbf{k}\}$, \\
$C_2=\max\{e^{-\tau \lambda},e^{\eta \mu}\}$, and $a_n^\sigma = \prod_{j=1}^{n}  \Vert P_{\sigma_{j+1}}^{-1}P_{\sigma_{j}}\Vert \Vert e^{D_{\sigma_j} (t_j-t_{j-1})}\Vert$. \\

\noindent The path $\sigma_1, \sigma_2, \cdots, \sigma_{n+1}$ can be decomposed into: edges in $\G_s$ and edges in $\G_u$. Distributing the terms in $\ln a_n^\sigma$, we get 
\begin{equation} \label{main5}
\ln a_n^\sigma \leq  N_n \sum_{(a,b)\in \G_s}\left( \ln \Vert P_{b}^{-1}P_{a}\Vert - \lambda_a \tau \right) + (n-N_n) \sum_{(a,b)\in \G_u }\left( \ln \Vert P_{b}^{-1}P_{a}\Vert + \mu_a \eta \right),
\end{equation}  
where $N_n=\#\{(\sigma_j,\sigma_{j+1})\in\mathcal{E}(\G_s)\ \vert\ j\in\mathbf{n}\}$.\\

\noindent As $t\ra +\infty$, $n\ra \infty$, hence atleast one of $N_n$ or $n-N_n$ diverges. Thus, if each term in the summation on the right hand side of the inequality~\ref{main5} is negative, then as $t\ra +\infty$, $\Vert x(t)\Vert\ra 0$. \\

\noindent It is easy to see that if a graph is acyclic, then there is a vertex with zero indegree and a vertex with zero outdegree. A topological sorting of a digraph is linearly ordering the vertices such that if there is a directed edge from a vertex $v$ to vertex $w$, then $v$ comes after $w$ in the ordering. For a digraph, topological sorting is possible if the graph is acyclic. Lemma~\ref{acyclic} uses a concept of \textit{topological sorting}, we refer to~\cite{Cormen} for details. 

\begin{exam}
Consider the acyclic graph $\G_1$ given in Figure~\ref{top_sor}. Both $\{3,4,1,5,2,6\}$ and $\{3,4,6,5,1,2\}$ are topological sortings of $\G$.
\end{exam}

\begin{figure}[h!]
\centering
\includegraphics[width=.5\textwidth]{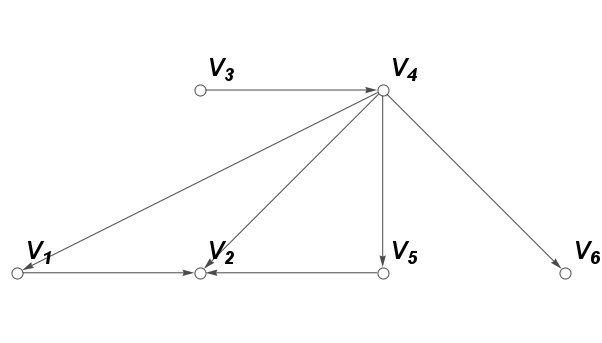}
\caption{The graph $\G_1$}
\label{top_sor}
\end{figure}

\begin{lemma}\label{acyclic}
If the subgraph $\G_u$ of $\G$ is acyclic, there are eigenvector matrices $Q_1,\cdots,Q_k$ of $A_1,\cdots,A_k$ so that 
\[
\max\{\Vert Q_b^{-1}Q_a\Vert \ \vert\ (a,b)\in \mathcal{E}(\G_u)\}<1.
\]
\end{lemma}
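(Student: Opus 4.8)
The plan is to obtain the matrices $Q_j$ by rescaling a fixed choice of eigenvector matrices $P_1,\dots,P_k$, exploiting the freedom noted in Remark~\ref{unit}: if $P_j$ is an eigenvector matrix of $A_j$, then so is $c_jP_j$ for any scalar $c_j>0$, and along an edge $(a,b)$ the scalars factor out of the spectral norm,
\[
\|(c_bP_b)^{-1}(c_aP_a)\|=\frac{c_a}{c_b}\,\|P_b^{-1}P_a\|.
\]
Thus the problem reduces to choosing positive scalars $c_1,\dots,c_k$ so that $\frac{c_a}{c_b}\|P_b^{-1}P_a\|<1$ for every edge $(a,b)\in\mathcal{E}(\G_u)$; equivalently, writing $w_j=\ln c_j$, so that $w_a-w_b<-\ln\|P_b^{-1}P_a\|$ along each such edge.

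Since $\G_u$ is acyclic, I would fix a topological sorting of $\G_u$ (regarded as a digraph on all $k$ vertices, isolated vertices placed arbitrarily). Let $\mathrm{pos}(v)\in\mathbf{k}$ denote the position of $v$ in this ordering. By the definition of topological sorting, every edge $(a,b)\in\mathcal{E}(\G_u)$ points from a later vertex to an earlier one, so $\mathrm{pos}(b)<\mathrm{pos}(a)$, and in particular $\mathrm{pos}(a)-\mathrm{pos}(b)\geq 1$.

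I would then rescale \emph{geometrically} along this order: fix
\[
0<\epsilon<\frac{1}{\max\{1,\ \max_{(a,b)\in\mathcal{E}(\G_u)}\|P_b^{-1}P_a\|\}}
\]
and set $c_v=\epsilon^{\mathrm{pos}(v)}$, $Q_v=c_vP_v$. For any edge $(a,b)\in\mathcal{E}(\G_u)$ this gives
\[
\|Q_b^{-1}Q_a\|=\epsilon^{\mathrm{pos}(a)-\mathrm{pos}(b)}\|P_b^{-1}P_a\|\leq \epsilon\,\|P_b^{-1}P_a\|<1,
\]
where the middle inequality uses $\mathrm{pos}(a)-\mathrm{pos}(b)\geq 1$ together with $\epsilon<1$, and the last uses the choice of $\epsilon$. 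Taking the maximum over the finitely many edges of $\G_u$ then yields the claim.

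The only point to watch is that a single common ratio $\epsilon$ must defeat all edges simultaneously; this is exactly what acyclicity buys, since in a topological order every edge drops by at least one position, so the uniform estimate $\epsilon^{\ge 1}\le\epsilon$ applies to all edges at once. Conversely, had $\G_u$ contained a loop, summing the required strict inequalities $w_b-w_a>\ln\|P_b^{-1}P_a\|$ around it would force $0>\sum\ln\|P_b^{-1}P_a\|$, which need not hold, so the acyclicity hypothesis is genuinely used. Everything else is routine: rescaling preserves the eigenvector property, and homogeneity of the spectral norm lets the scalars factor out cleanly.
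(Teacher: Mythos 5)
Your proof is correct and follows essentially the same route as the paper: both exploit the rescaling freedom $Q_v=c_vP_v$ and use a topological sorting of the acyclic subgraph $\G_u$ to choose geometric scalings so that every edge of $\G_u$ picks up a uniform contraction factor (your $\epsilon<1$ playing the role of the paper's $\tau^{-1}=\zeta/\rho$). Your version is, if anything, a slightly cleaner packaging of the same idea, since you scale uniformly by position rather than segment-by-segment as the paper does.
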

\begin{proof}
Suppose $\rho=\max\{\Vert P_b^{-1}P_a\Vert \ \vert\ (a,b)\in \mathcal{E}(\G_u)\}$ be greater than 1. We will choose an appropriate scaling of $P_1,\cdots,P_k$ to obtain eigenvector matrices $Q_1,\cdots,Q_k$ such that $\rho'=\max\{\Vert Q_b^{-1}Q_a\Vert \ \vert\ (a,b)\in \mathcal{E}(\G_u)\}$ is less than 1. Let $0<\zeta<1$ and $\tau=\rho/\zeta>1$. Let $\epsilon>0$ be fixed.\\
Since $\G_u$ is acyclic, it has a topological sorting, say $v_{a_1}\ra v_{a_2}\ra\cdots\ra v_{a_m}$. Let the linear sequence in the sorting be such that $v_{a_1},\cdots, v_{a_{m_1}}\in\{r+1,\cdots,k\}$ and have zero indegree, $v_{a_{m_1+1}},\cdots, v_{a_{m_2}}\in\{r+1,\cdots,k\}$ and $v_{a_{m_2+1}},\cdots, v_{a_{m}}\in\{1,\cdots,r\}$. Let $Q_{a_j}=P_{a_j}$, for $j=1,\cdots,m_1$, $Q_{a_j}=\tau^{j-m_1}P_{a_j}$, for $j=m_1+1,\cdots,m_2$, and $Q_{a_j}=\tau^{m_2-m_1+1}P_{a_j}$, for $j=m_2+1,\cdots,m$. For the remaining indices $i\in\textbf{k}\setminus\{a_1,\cdots,a_m\}$, set $Q_i=P_i$.\\
Note that $\Vert Q_b^{-1}Q_a\Vert = \tau^{-i} \Vert P_b^{-1}P_a\Vert$, for some $i\geq 1$. Hence  $\Vert Q_b^{-1}Q_a\Vert \leq \tau^{-1} \Vert P_b^{-1}P_a\Vert\leq \zeta<1$.
\end{proof}

\noindent If the graph $\G_u$ is acyclic, then using~\ref{acyclic}, choose the eigenvector matrices $Q_1,\cdots,Q_k$ of $A_1,\cdots,A_k$, and let  
\[
\rho'=\max\{\Vert Q_b^{-1}Q_a\Vert \ \vert\ (a,b)\in \mathcal{E}(\G_u)\}, \ \text{and } \alpha'=\max\{\Vert Q_b^{-1}Q_a\Vert \ \vert\ (a,b)\in \mathcal{E}(\G_s)\}.
\] 
Note that $\rho'<1$. Thus we obtain the following result.

\begin{prop}\label{thmsufinal}
With the notation as above, if the sub-graph $\G_u$ of $\G$ is acyclic, then the switched system~\ref{main} with $\sigma\in S_{\G}(\tau,\eta)$ is stable if 
\begin{equation} \label{cond1final}
\tau>\max_{(i,j)\in\mathcal{E}(\G_u)} \dfrac{\ln \Vert Q_j^{-1}Q_i\Vert}{\lambda_i}, \ \text{and}\ \eta<-\max_{(i,j)\in\mathcal{E}(\G_s)} \dfrac{\ln \Vert Q_j^{-1}Q_i\Vert}{\mu_i}.
\end{equation} 
\end{prop}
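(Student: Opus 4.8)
The plan is to reuse the estimate for $a_n^\sigma$ derived immediately above the statement and to feed into it the rescaled eigenvector matrices $Q_1,\ldots,Q_k$ furnished by Lemma~\ref{acyclic}. First I would fix those $Q_j$ and write, for $\sigma\in\mathcal{S}_\G(\tau,\eta)$ and $t\in[t_n,t_{n+1})$, the bound $\Vert x(t)\Vert\le C_1 C_2\, a_n^\sigma\Vert x(0)\Vert$ with $a_n^\sigma=\prod_{j=1}^n \Vert Q_{\sigma_{j+1}}^{-1}Q_{\sigma_j}\Vert\,\Vert e^{D_{\sigma_j}(t_j-t_{j-1})}\Vert$. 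Then I would split this product according to whether the source vertex $\sigma_j$ is stable or unstable, which is exactly the partition of the traversed edges into $\mathcal{E}(\G_s)$ and $\mathcal{E}(\G_u)$. On a stable-source edge $(a,b)\in\mathcal{E}(\G_s)$ the dwell constraint $t_j-t_{j-1}\ge\tau$ together with Remark~\ref{diag} gives the factor bound $\Vert Q_b^{-1}Q_a\Vert\,e^{-\lambda_a\tau}$, while on an unstable-source edge $(a,b)\in\mathcal{E}(\G_u)$ the flee constraint $t_j-t_{j-1}\le\eta$ gives $\Vert Q_b^{-1}Q_a\Vert\,e^{\mu_a\eta}$.

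With this split, $\ln a_n^\sigma$ decomposes into contributions $\ln\Vert Q_b^{-1}Q_a\Vert-\lambda_a\tau$ over the stable-source edges traversed and $\ln\Vert Q_b^{-1}Q_a\Vert+\mu_a\eta$ over the unstable-source edges traversed, grouped as in~\eqref{main5}. Since $t_n\to\infty$ forces $n\to\infty$, at least one of $N_n$ and $n-N_n$ diverges, so $\ln a_n^\sigma\to-\infty$, and hence $\Vert x(t)\Vert\to 0$, as soon as every bracketed term is strictly negative. The per-edge negativity requirements are $\tau>\ln\Vert Q_b^{-1}Q_a\Vert/\lambda_a$ for each $(a,b)\in\mathcal{E}(\G_s)$ and $\eta<-\ln\Vert Q_b^{-1}Q_a\Vert/\mu_a$ for each $(a,b)\in\mathcal{E}(\G_u)$; taking maxima yields a dwell bound indexed by $\mathcal{E}(\G_s)$ using the stable rates $\lambda_a$ and a flee bound indexed by $\mathcal{E}(\G_u)$ using the unstable rates $\mu_a$. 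The role of Lemma~\ref{acyclic} is decisive here: it guarantees $\rho'=\max_{(a,b)\in\mathcal{E}(\G_u)}\Vert Q_b^{-1}Q_a\Vert<1$, so each numerator $\ln\Vert Q_b^{-1}Q_a\Vert$ in the flee bound is negative and the resulting upper bound $-\ln\Vert Q_b^{-1}Q_a\Vert/\mu_a$ is a genuine positive number; this is precisely why acyclicity of $\G_u$ is required.

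The main obstacle is reconciling what the argument yields with the conditions~\eqref{cond1final} exactly as typeset. As printed, the dwell bound in~\eqref{cond1final} takes its maximum over $(i,j)\in\mathcal{E}(\G_u)$ while dividing by $\lambda_i$, and the flee bound takes its maximum over $(i,j)\in\mathcal{E}(\G_s)$ while dividing by $\mu_i$. But an edge $(i,j)\in\mathcal{E}(\G_u)$ has source $i\in\{r+1,\ldots,k\}$, where only $\mu_i$, and not $\lambda_i$, is defined; symmetrically $\mu_i$ is undefined on a stable source $i\in\{1,\ldots,r\}$ appearing in the flee bound. Under the paper's conventions the maxima in~\eqref{cond1final} are therefore not well-defined as written, and the two subgraph subscripts must be read interchanged, namely $\tau>\max_{(i,j)\in\mathcal{E}(\G_s)}\ln\Vert Q_j^{-1}Q_i\Vert/\lambda_i$ paired with $\G_s$ and $\eta<-\max_{(i,j)\in\mathcal{E}(\G_u)}\ln\Vert Q_j^{-1}Q_i\Vert/\mu_i$ paired with $\G_u$. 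With that reading, forced both by the definedness of the rates and by the directions of the inequalities above, every bracketed term in~\eqref{main5} is negative and stability follows. I do not see how to establish the literal pairing: it invokes rates at vertices where they are undefined, and under the only natural uniform extension $\lambda_i:=-\mu_i$ the literal flee bound reduces to $\eta<\min_{(i,j)\in\mathcal{E}(\G_s)}\ln\Vert Q_j^{-1}Q_i\Vert/\lambda_i$, which generically excludes all $\eta>0$.
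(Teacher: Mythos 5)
Your proof is correct and is exactly the paper's argument: the paper proves Proposition~\ref{thmsufinal} simply by replacing $P_i$ with $Q_i$ in \eqref{main5} and requiring every bracketed term there to be strictly negative, which is precisely your per-edge negativity condition, with Lemma~\ref{acyclic} supplying $\rho'<1$ so that the flee bound is a genuine positive number and at least one of $N_n$, $n-N_n$ diverging finishing the argument. Your diagnosis of \eqref{cond1final} is also right: the subscripts $\G_s$ and $\G_u$ are interchanged in the typeset statement (a typo), since \eqref{main5} pairs $-\lambda_a\tau$ with edges of $\G_s$ and $+\mu_a\eta$ with edges of $\G_u$; the corrected pairing is the one appearing in point 4 of Remarks~\ref{app}, and is confirmed numerically by Example~\ref{gen_exam}, where the dwell bound $54.4812$ equals $\ln(232.32)/0.1=\ln\alpha'/\lambda_2$, i.e.\ the maximum $\alpha'$ over $\mathcal{E}(\G_s)$ divided by a stable rate.
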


\begin{rem}
The proof of~\ref{thmsufinal} follows by replacing $P_i$ by $Q_i$ in~\ref{main5}. \\
A condition similar to~\ref{cond1final} is given in~\ref{dwell} when all the subsystems are stable, and point 4 in~\ref{app} when there are unstable subsystems as well. 
\end{rem}

\begin{exam}\label{gen_exam}
Let  the switched system~\ref{main} be defined on $\mathbb{R}^2$ and comprises of five subsystems  
\begin{eqnarray*}
A_1 =\left(
\begin{array}{c@{\enskip}c}
-2&0\\
0&-2
\end{array}\right), \
A_2 =\left(
\begin{array}{c@{\enskip}c}
-0.4&-0.03\\
1.4&0.04
\end{array}\right), \
A_3 = \left(
\begin{array}{c@{\enskip}c}
1&1\\
0&1
\end{array}\right),
\end{eqnarray*}
\begin{eqnarray*}
A_4 = \left(
\begin{array}{c@{\enskip}c}
0.1&0\\
0.1&0.2
\end{array}\right),\
A_5 = \left(
\begin{array}{c@{\enskip}c}
2&0.1\\0.1&2
\end{array}\right).
\end{eqnarray*}

Here $A_1, A_2$ are stable, and $A_3, A_4, A_5$ are unstable matrices. Let the underlying graph $\G$ be as in~\ref{adj_exam4}. The following linear topological sorting is considered for $\G_u\cup \G_b^2$: $v_3\ra v_5\ra v_4\ra v_1\ra v_2$, where $v_3$ and $v_5$ have no incoming edges. 

\noindent As per the notation in~\ref{acyclic}, $\rho=12.6987$. For $\zeta=0.99$, $\tau=12.8269$, $\rho'=0.603772$ and $\alpha'=232.32$ ($Q_4=\tau P_4$, $Q_1=\tau^2 P_1$, $Q_2=\tau^2 P_2$, $Q_3=P_3$, and $Q_5=P_5$). By Proposition~\ref{thmsufinal}, the switched system in $\mathcal{S}_{\G}(\tau,\eta)$ is stable if $\eta>0, \tau>0$ satisfy
\[
\tau>54.4812, \eta<1.21981.
\]
\end{exam}

\subsubsection{Commuting subsystem matrices}
It is well-known that if the matrices $A_1,\cdots,A_k$ are all Hurwitz and pair-wise commute with each other, then for any given graph $\G$, the switched system~\ref{main} is stable, for all $\G$-admissible switching signals $\sigma$, see~\cite{Lib} for proofs. When some subsystems are unstable, we can obtain concrete results when $\sigma$ belongs to the collections $\mathcal{S}_\G(\tau,\eta)$ and $\mathcal{S}_\G(\tau_1,\eta_1,\cdots,\tau_p,\eta_p)$. Since any two commuting diagonalizable matrices are simultaneously diagonalizable, if $A_1,\cdots,A_k$ pair-wise commute with each other, there exists an invertible matrix $P$ which simultaneously diagonalizes $A_1,\cdots,A_k$. Taking $P_i=P$, for all $i\in\textbf{k}$, we can further simplify the results (note that the columns of $P$ may not have unit norm). Thus, the stability conditions obtained in~\ref{stable-unstable} will be independent of the eigenvector matrices, since $\Vert P_j^{-1}P_i\Vert=\Vert I\Vert=1$, for all $(i,j)\in\mathcal{E}(\G)$. Further, the results can be improved as illustrated in~\ref{unidir}. 
%

\section{Concluding Remarks}
\noindent We have obtained stability conditions for the switched system (with all stable subsystems) using the concepts of standard decomposition and simple loop dwell time. Our results provide a mechanism of slow-fast (non-uniform) switching for stability of the switched system. Further, we considered the stability problem for switched systems which have both stable and unstable subsystems. We obtain conditions on the switching pattern, and bounds on the dwell time and the flee time to ensure stability of the switched system. Moreover, similar to switched systems with only stable subsystems, we obtain a slow-fast switching mechanism to stabilize the system.   

In this study, the switching sequence is deterministic. However, the concepts developed in this paper can be used to study almost sure stability of the switched system when the switching is stochastic, see~\cite{PS}. Also, one can explore the applicability of these results to large scale systems, and compare the computational costs of using these results of stability with existing results/models. Moreover, all the proofs presented here heavily rely on the choice of matrix spectral norm. It will be interesting to study the possibility of defining a new norm which gives tighter bounds, along the lines of~\cite{PRS}.

\section{Acknowledgements}
\noindent This work has been funded by SERB (DST), India.

\bibliographystyle{plain}
\bibliography{mybibfile_rev}
\end{document}